\newtheorem{theorem}{Theorem}
\newtheorem{lemma}[theorem]{Lemma}
\newtheorem{corollary}[theorem]{Corollary}
\theoremstyle{definition}
\newtheorem{definition}[theorem]{Definition}
\newtheorem{remark}[theorem]{Remark}
\newcommand{\DKL}[2]{\operatorname{KL}(#1\parallel#2)}
\numberwithin{equation}{section}
\newcommand{\kmix}{\ensuremath{k\textnormal{\normalfont-mix}}}
\newcommand{\transpose}{^{\mathsf{T}}}                                   
\newcommand{\DTV}[2]{\|#1 - #2\|_{TV}}
\newcommand{\eps}{\varepsilon}
\DeclareMathOperator{\tr}{Tr}
\DeclareMathOperator{\vc}{VC-dim}
\DeclareMathOperator{\nn}{NN}
\newcommand{\fcal}{\mathcal F}
\newcommand{\gcal}{\mathcal G}
\newcommand{\acal}{\mathcal A}
\newcommand{\ycal}{\mathcal Y}
\newcommand{\pcal}{\mathcal P}
\newcommand{\ncal}{\mathcal N}
\newcommand{\R}{\mathbb{R}}
\DeclareMathOperator{\kl}{KL}
\author{Hassan Ashtiani \and Abbas Mehrabian}
\address{
	Hassan Ashtiani : University of Waterloo\\
	Abbas Mehrabian: McGill University}
\email{abbasmehrabian@gmail.com, mhzokaei@uwaterloo.ca}
\thanks{Abbas Mehrabian was supported by a CRM-ISM postdoctoral fellowship.}
\begin{document}

\title[Some techniques in density estimation]{Some techniques in density estimation}

\begin{abstract}
Density estimation is an interdisciplinary topic at the intersection of statistics, theoretical computer science and machine learning.
We review some old and new techniques \cite{ashtiani2017agnostic,ashtiani2017sample,onedimensional,spherical} for bounding the sample complexity of estimating densities of continuous distributions, focusing on the class of mixtures of Gaussians and its subclasses.
In particular, we review the main techniques used to prove the new sample complexity bounds for mixtures of Gaussians by Ashtiani, Ben-David, Harvey, Liaw, Mehrabian, and Plan.
\end{abstract}
\maketitle


\tableofcontents

\section{Introduction}
These days unsupervised learning is very popular due to the amount of available unlabeled data.
The general goal in unsupervised learning is to find structure in the data.
This `structure' can be the clusters in the data,
the principal components of the data, or 
the intrinsic dimension of the data, and so on.
Distribution learning (also known as density estimation) is the task of explicitly estimating the 
distribution underlying the data, which can then be explored to find the desired structure, or to generate new data.
We mention two examples.

The first example, taken from~\cite{jordan96neural}, is anomaly detection: interpreting X-ray images (mammograms)  for detecting breast cancer.
In this case the training data consists of normal (non-cancerous) images; a probability density function $\mu:\mathbb{R}^d\to\mathbb{R}$ is learned from the data.
When a new input $x'$ is presented to the system, a high value for $\mu(x')$ indicates a normal image, while a low value indicates a novel input, which might be characteristic of an abnormality; the patient is then referred to a clinician for further examination.
The second example, taken from~\cite{deep_learning_book}, is synthesis and sampling, or generative models: in many cases we would like to generate new examples that are similar to those in the training data, e.g., in media applications, where it can be expensive or boring for an artist to generate large volumes of content by hand. 
Given the training data, the algorithm  
estimates a probability density function $\mu:\mathbb{R}^d\to\mathbb{R}$ that models the data, and then generates new examples according to this distribution.
For example, video games can automatically
generate (random but reasonable) textures for large objects or landscapes, rather than requiring an artist to manually colour each pixel.

For supervised learning (in particular, classification problems), there are by now a variety of mathematical tools to understand the hardness of the problem (VC-dimension, Rademacher complexity, covering numbers, margins, etc., see~\cite{AB99,Shalev-Shwartz2014}).
We lack such a satisfactory mathematical understanding in the case of unsupervised learning (in particular, distribution learning); determining the sample complexity of learning with respect to a general class of distributions is an open problem (see~\cite[Open Problem~15.1]{Diakonikolas2016}).

More specifically, distribution learning refers to the following task:
given data generated from an unknown target probability distribution $\mu$, find a distribution $\widehat{\mu}$ that is `close' to $\mu$.
To define this problem more precisely, one needs to specify:

\begin{enumerate}
\item What is assumed about the target distribution? 
This question is more pertinent than ever in this era of large high-dimensional data sets.
Typically one assumes the target belongs to some class of distributions, or it is close to some distribution in that class.
\item What does `close' mean? There are various statistical measures of closeness, e.g., the Kullback-Leibler divergence, $L_1$ and $L_2$ distances (see~\cite[Chapter~5]{devroye_book} for a discussion).
\item How is data sampled from the distribution? One  usually assumes access to i.i.d.\ data, but in some settings other models such as Markov Chain-based sampling may be more appropriate.
\end{enumerate}

Once the above questions are answered, we have a well defined problem, for which we can propose algorithms.
Such an algorithm is evaluated using two metrics:
(i)~the sample complexity, i.e., the number of samples  needed to guarantee a small error, and 
(ii)~the computational complexity, or the running time of the algorithm.

In this survey, we assume the target class consists of mixtures of Gaussians in high dimensions, or is a subclass of this class.
We focus on the $L_1$ distance as the measure of closeness, and we assume i.i.d.\ sampling.
Our goal is to give bounds for the \emph{sample complexity} for distribution learning, or density estimation.
We shall use these two phrases interchangeably here; distribution learning (or PAC-learning of distributions) is usually used in the computer science/machine learning community and is a broader term, whereas density estimation is usually used in the statistics literature (see \cite[Section~2]{Diakonikolas2016} for a discussion).

The reason for this choice is that recently this problem has attracted much attention again, and many results have been proved during the last few years (see~\cite{ashtiani2017agnostic,ashtiani2017sample,onedimensional,gaussian_mixture,spherical}).

The literature on density estimation is vast and we have not tried to be comprehensive.
We shall just review some techniques that have been particularly successful in proving rigorous bounds for sample complexity of learning mixtures of Gaussians.
The reader is referred to~\cite{Diakonikolas2016} for a broader, recent survey.
For a general, well written introduction to density estimation, read~\cite{devroye_book}.
This survey is based on the papers
\cite{ashtiani2017agnostic,ashtiani2017sample,onedimensional,spherical};
the reader is referred to the original papers for full proofs.
Most of the material in Section~\ref{sec:vc} also appears in~\cite{devroye_book}.

In Section~\ref{sec:formal} we set up our notation. In each subsequent section we present one technique and demonstrate it by showing a bound on the sample complexity of learning a particular class of distributions.
Concluding remarks appear in Section~\ref{sec:remarks}.

\section{The formal framework}
\label{sec:formal}
A \emph{distribution learning method} or \emph{density estimation method} is an algorithm that takes as input an i.i.d.\ sample generated from a distribution $g$, and outputs (a description) of a distribution $\hat{g}$ as an estimation for $g$.
Furthermore, we assume that $g$ belongs to some known class $\mathcal{F}$ of distributions, but $\widehat{g}$ is not required to belong to $\fcal$
(if it does, then the method is called a `proper' learner). 

We only consider continuous distributions in this survey,
and so we identify a `probability distribution' by its `probability density function.' 
Let $Z$ be a Euclidean space,
and let $f_1$ and $f_2$ be two distributions defined over the Borel $\sigma$-algebra $\mathcal{B}\subseteq 2^Z$. 
The total variation distance between $f_1$ and $f_2$ is defined by
\[\|f_1- f_2\|_{TV} = 
\sup_{B\in \mathcal{B}}|f_1(B) - f_2(B)| = 
\sup_{B\in \mathcal{B}}\left|\int_B f_1 - \int_B f_2\right| =
\frac{1}{2}\|f_1 - f_2\|_1 \:,
\]
where 
\(\|f\|_1\coloneqq \int_Z |f(x)|\mathrm{d} x\) 
is the $L_1$ norm of $f$.
In the following definitions,  $\mathcal{F}$ is a class of probability distributions, and $g$ is an arbitrary distribution.
The total variation distance and the $L_1$ distance are within constant factor of each other, and we generally do not worry about constants in this survey, so we will use them interchangeably, except when a confusion might occur.

We write $X \sim g$ to denote the random variable (or random vector) $X$ has distribution $g$, and we write $S\sim g^m$ to mean that $S$ is an i.i.d.\ sample of size $m$ generated from $g$.

\begin{definition}[$\eps$-approximation, $\eps$-close]
	A distribution $\hat{g}$ is 
	an \emph{$\eps$-approximation} for $g$,
	or is \emph{$\eps$-close} to $g$,
	 if $ \|\hat{g}- g\|_1 \leq \eps$.
\end{definition}

\begin{definition}[density estimation method, sample complexity]\label{def:density_estimation}
	A \emph{density estimation method} for $\mathcal{F}$ has \emph{sample complexity} $m_{\mathcal{F}}(\eps, \delta)$ if, for any distribution $g\in\mathcal F$ and any $\eps, \delta \in(0,1)$, given $\eps$, $\delta$, and an i.i.d.\ sample of size $m_{\mathcal{F}}(\eps, \delta)$ from $g$, with probability at least $1-\delta$ outputs an $\eps$-approximation of $g$.
\end{definition}

In the machine learning literature, such a density estimation method for class $\fcal$ is called a `PAC distribution learning method for $\fcal$ in the realizable setting,' or an `$\fcal$-leaner,' with sample complexity $m_{\mathcal{F}}(\eps, \delta)$.
We also say we can `learn class $\fcal$ with 
$m_{\mathcal{F}}(\eps, \delta)$ samples'.
Typically the dependence on $\delta$ is logarithmic and hence non-crucial, and sometimes we may just say the sample complexity is $m_{\mathcal{F}}(\eps)$, ignoring its dependence on $\delta$ (this means we take, e.g., $\delta=1/3$). Note that the sample complexity should not depend on the specific underlying probability distribution $g$, but should uniformly hold for \emph{all} $g\in \fcal$. This uniform notion of learning is sometimes called \emph{minimax} density estimation in the statistics literature.


Let 
\[
\Delta_k \coloneqq \left\{ (w_1,\dots,w_k) : w_i\geq 0, \sum w_i=1\right\}
\]
denote the $k$-dimensional simplex.

\begin{definition}[$k$-mix($\mathcal{F}$)]
The class of \emph{$k$-mixtures} of $\mathcal{F}$, written \emph{$k$-mix($\mathcal{F}$)}, is defined as 
	$$k\textnormal{-mix}(\fcal) \coloneqq \left\{\sum_{i=1}^{k}w_{i}f_{i}: (w_1,\dots,w_k)\in \Delta_k ,
	f_1,\dots,f_k\in\mathcal F
	\right\}.$$
\end{definition}

A one-dimensional Gaussian random variable with mean $\mu$ and variance $\sigma^2$ is denoted by $N(\mu,\sigma^2)$.
Let $d$ be a positive number, denoting the dimension.
A $d$-dimensional Gaussian with mean $\mu\in \R^d$ and (positive semidefinite) covariance matrix $\Sigma \in \R^{d\times d}$ is a probability distribution over $\R^d$, denoted $\ncal(\mu,\Sigma)$, with probability density function
\[
\ncal(\mu,\Sigma) (x) \coloneqq \exp ( - (x-\mu)^T \Sigma^{-1} (x-\mu)/2)/\sqrt{2\pi \det \Sigma}.
\]
A random variable with density $\ncal(\mu,\Sigma)$ is denoted by $N(\mu,\Sigma)$.
Let $\gcal_{d,1}$ denote the class of $d$-dimensional Gaussian distributions,
and let $\gcal_{d,k}$ denote the class of $k$-mixtures of $d$-dimensional Gaussian distributions: $\gcal_{d,k} \coloneqq k\textnormal{-mix}(\gcal_{d,1})$.

If $\Sigma$ is a diagonal matrix, then $\ncal(\mu,\Sigma)$ is called an \emph{axis-aligned} Gaussian, since in this case the eigenspace of $\Sigma$ coincide with the standard basis.
Let $\acal_{d,1}$ denote the class of $d$-dimensional axis-aligned Gaussian distributions,
and let $\acal_{d,k} \coloneqq k\textnormal{-mix}(\acal_{d,1})$.


\renewcommand\arraystretch{1.5} 
\begin{table}
\begin{tabular}{c | c | c | c}
Distribution family & Bound on sample complexity & Reference & Section \\\hline
$\gcal_{d,1}$ & ${O(d^2 / \eps^2)}$ & \cite{ashtiani2017sample} &  \ref{sec:vc} \\
$\acal_{d,1}$ & $O(d / \eps^2)$ & \cite{ashtiani2017sample} &  \ref{sec:vc} \\
$\gcal_{1,k}=\acal_{1,k}$ & $\widetilde{O}(k/\eps^2)$& \cite{onedimensional} & \ref{sec:onedimensional}\\
$\gcal_{d,k}$ & $\widetilde{O}(kd^2 / \eps^2)$ & \cite{ashtiani2017agnostic} &  \ref{sec:compression} \\
$\acal_{d,k}$ & $\widetilde{O}(kd /  \eps^2)$ & \cite{ashtiani2017agnostic} &  \ref{sec:compression} \\ 
\hline
$\gcal_{d,1}$&$\widetilde\Omega(d^2 /  \eps^2)$&\cite{ashtiani2017agnostic}&\ref{sec:lower}\\
$\acal_{d,1}$&$\widetilde\Omega(d /  \eps^2)$&\cite{spherical}&\ref{sec:lower}\\
$\gcal_{1,k}=\acal_{1,k}$ & $\widetilde\Omega(k/\eps^2)$& \cite{spherical} & \ref{sec:lower}\\
$\gcal_{d,k}$&$\widetilde\Omega(kd^2 /  \eps^2)$&\cite{ashtiani2017agnostic}&\ref{sec:lower}\\
$\acal_{d,k}$&$\widetilde\Omega(kd /  \eps^2)$&\cite{spherical}&\ref{sec:lower}\\
\hline
\end{tabular}
\vspace{5mm}
\caption{Results covered in this survey. The top 5 rows are upper bounds, and the bottom 5 rows are  lower bounds.}\label{table}
\end{table}

We demonstrate some of the techniques used in density estimation
by proving the upper and lower bounds in Table~\ref{table}.
For proving the upper bounds, we provide a density estimation method.
For the lower bounds, we show that \emph{any} density estimation method for the corresponding class  must use at least the given number of samples.
Throughout the survey, $\widetilde{O}, \widetilde{\Omega}, \widetilde{\Theta}$ allow for polylogarithmic factors, and $[n]\coloneqq\{1,\dots,n\}$.
We say a bound is tight if it is tight up to polylogarithmic factors.

\section{Sample complexity upper bounds via VC-dimension}
\label{sec:vc}

The VC-dimension of a set system,
first introduced by Vapnik and Chervonenkis,
has applications in diverse areas such as
graph theory, discrete geometry~\cite{epsnet}, and the theory of empirical processes~\cite{tal},
and is known to precisely capture the sample complexity of learning in the setting of binary classification~\cite{AB99,Shalev-Shwartz2014}.
In this section we show it can also be used to give upper bounds for sample complexity of density estimation.
The methods of this section have been developed in~\cite{devroye_book}, which is the first place where VC-dimension is used for bounding the sample complexity of density estimation.
The main results of this section appear in~\cite{ashtiani2017sample}.

We start by defining the Vapnik-Chervonenkis dimension (VC-dimension for short) of a set system.

\begin{definition}[VC-dimension]
Let $H$ be a family of subsets of a set $Z$.
We say a set $A\subseteq Z$ is shattered if,
for \emph{any} $B\subseteq A$, there exists some $C\in H$ such that $A \cap C = B$.
The \emph{VC-dimension} of $H$, denoted $\vc(H)$, is the size of the largest shattered set.
\end{definition}

In this section we show an upper bound of $O(d^2/\eps^2)$ for learning $\gcal_{d,1}$,
and an upper bound of $O(d/\eps^2)$ for learning $\acal_{d,1}$.
The plan is to first connect the sample complexity of learning an arbitrary class $\fcal$ to the VC-dimension of a class of a related set system, called the Yatracos class of $\fcal$ (this is done in Theorem \ref{thm:distributionVC}), and then provide upper bounds on VC-dimension of this set system.
Let $Z$ be an arbitrary set, which will be the domain of our probability distributions.

\begin{definition}[$\mathcal{A}$-distance] Let $\mathcal{A}\subseteq 2^Z$, and let $p$ and $q$ be two probability distributions over $Z$. The \emph{$\mathcal{A}$-distance} between $p$ and $q$ is defined as
	$$
	\|p-q \|_{\mathcal{A}} \coloneqq \sup_{A\in \mathcal{A}} |p(A) - q(A)|.
	$$
\end{definition}


\begin{definition}[empirical distribution] Let $S = (x_i)_{i=1}^{m}$ be a sequence of members of $Z$. The \emph{empirical distribution} corresponding to this sequence is defined by $\hat{p}_S(A) \coloneqq  \frac 1 m |A \cap \{x_1,\dots,x_m\}|$ for any $A\subseteq Z$. 
\end{definition}

The following lemma is a well known refinement of the uniform convergence theorem of~\cite{vc}, due to Talagrand~\cite{tal};
we use the wording of~\cite[Theorem~4.9]{AB99}.
\begin{lemma}[uniform convergence theorem]\label{lemma:vc}
	Let $p$ be a probability distribution over $Z$. 
	Let $\mathcal{A} \subseteq 2^Z$ and let $v$ be the VC-dimension of $\mathcal{A}$. Then, there exist universal positive constants $c_1,c_2,c_3$ such that
	$$\mathbf{Pr}_{S\sim p^m} \{\|p-\hat{p}_S \|_{\mathcal{A}} \geq \eps \}\leq 
	\exp(c_1 + c_2 v - c_3 m \eps^2)\:.$$
\end{lemma}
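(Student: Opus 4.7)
The plan is to follow the classical symmetrization-plus-Sauer proof template and then apply Talagrand's chaining refinement to strip off the usual $\log m$ factor that the naive argument produces. At a high level, we want to bound
\[
\Pr_{S\sim p^m}\Bigl\{\sup_{A\in\mathcal{A}} |p(A)-\hat p_S(A)| \geq \eps\Bigr\}
\]
by turning the supremum over the potentially infinite family $\mathcal{A}$ into a supremum over a finite collection of traces that $\mathcal{A}$ cuts on the sample, whose size is controlled by $v=\vc(\mathcal{A})$.

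First, I would introduce a ghost sample $S'\sim p^m$ independent of $S$. A standard symmetrization inequality shows that for $m\eps^2$ not too small,
\[
\Pr\bigl\{\|p-\hat p_S\|_{\mathcal{A}} \geq \eps\bigr\}
\leq 2\,\Pr\bigl\{\|\hat p_{S'}-\hat p_S\|_{\mathcal{A}} \geq \eps/2\bigr\}.
\]
Now, conditional on the pooled sample $S\cup S'$ of size $2m$, the joint distribution is invariant under swapping the $i$th point of $S$ with the $i$th point of $S'$, so I can replace the randomness by independent Rademacher signs $\sigma_i\in\{\pm1\}$ and bound the probability that $\sup_{A\in\mathcal{A}} m^{-1}|\sum_i \sigma_i \mathbf{1}\{x_i\in A\}|$ is large, where $(x_i)_{i=1}^{2m}$ is the pooled sample. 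At this point I invoke Sauer's lemma: restricted to the $2m$ pooled points, $\mathcal{A}$ realizes at most $(2em/v)^v$ distinct subsets. For each such fixed subset, Hoeffding's inequality for Rademacher sums gives a subgaussian tail, and a union bound then yields a bound of the form $\exp(c\,v\log(m/v) - c' m\eps^2)$.

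To remove the $\log m$ factor and obtain the clean exponent $c_1+c_2 v - c_3 m \eps^2$ stated in the lemma, I would invoke Talagrand's chaining argument (this is the refinement attributed to Talagrand in the excerpt). Instead of union-bounding once against all traces at scale $\eps$, one builds a geometric sequence of nets at scales $2^{-j}$ inside the $L_2(\hat p_{S\cup S'})$ metric, union-bounds the increments at each scale using Sauer's lemma applied to the effective dimension at that scale, and sums up the resulting subgaussian tails. The key input is Haussler's packing bound, which says that for a class of VC-dimension $v$, the $L_2(\mu)$-covering number at scale $\tau$ is at most $(C/\tau)^{O(v)}$ with the constant in the exponent being absolute. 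Plugging this into Dudley's entropy integral yields an expected supremum of order $\sqrt{v/m}$, and a standard concentration step upgrades this into the desired exponential tail.

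The main obstacle is the last step: the naive Sauer-plus-Hoeffding argument gives an extra $\log m$ factor in the exponent, and eliminating it genuinely requires either Haussler's sharp $L_2$-covering bound or an equivalent chaining argument. Everything before the chaining step (symmetrization, Rademacher swapping, Sauer's lemma, Hoeffding) is routine and gives only the weaker bound; the conceptual work is in the chaining. For the purposes of this survey, I would cite Talagrand's paper and \cite[Theorem~4.9]{AB99} for the precise statement and use it as a black box, since redoing the chaining here would distract from the applications to density estimation that follow.
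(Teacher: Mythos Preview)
The paper does not prove this lemma at all: it is stated as a known refinement of Vapnik--Chervonenkis due to Talagrand, with the wording taken from \cite[Theorem~4.9]{AB99}, and used as a black box. Your final suggestion---cite Talagrand and \cite[Theorem~4.9]{AB99} and move on---is exactly what the paper does, so in that sense you agree with the paper.

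That said, the proof sketch you supply before arriving at that conclusion is correct and is the standard route: symmetrization with a ghost sample, Rademacher swapping, Sauer's lemma plus Hoeffding to get the crude $\exp(cv\log(m/v)-c'm\eps^2)$ bound, and then chaining against Haussler's $L_2$ packing estimate to kill the $\log m$. You have identified the right obstacle (the extra $\log m$) and the right tool to remove it. Since the paper offers nothing beyond the citation, your sketch actually adds content; for a survey one could include a sentence along the lines of ``the naive union bound loses a $\log m$ factor, which chaining against Haussler's covering bound removes,'' but the paper chose not to.
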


\begin{definition}[Yatracos class] \label{def_yatrocas}
	For a class $\fcal$ of functions from $Z$ to $\mathbb{R}$,
	the associated \emph{Yatracos class} is the family of subsets of $Z$ defined as
	\[
	\mathcal{Y}(\fcal) \coloneqq
	\left\{
	\{x\in Z : f_1(x)\geq f_2(x)\}
	\textnormal{ for some }
	f_1,f_2\in\fcal.
	\right\}
	\]
\end{definition}

Observe that if $f,g \in \fcal$ then
\(\|f - g \|_{TV} = \|f - g \|_{\mathcal{Y}(F)}\).
To see this, let $A \coloneqq \{x : f(x) \geq g(x)\}\in\ycal({\fcal})$, and observe that
\begin{align*}
\|f - g \|_{TV} & = \frac 1 2
\|f-g\|_{1} = 
\frac 1 2 \int |f(x)-g(x)| dx 
= \int_{A} (f(x) - g(x)) dx  \\
&= \int_{A} f(x) dx -
\int_{A} g(x) dx = |f(A)-g(A)| \leq \|f - g\|_{\ycal(\fcal)} .
\end{align*}
The other direction, namely
$\|f - g \|_{TV} \geq \|f - g\|_{\ycal(\fcal)}$,
follows from the definition of the total variation distance.

\begin{definition}[empirical Yatracos minimizer]\label{eym}
	Let $\mathcal{F}$ be a class of distributions over domain $Z$. 
	The \emph{empirical Yatracos minimizer} is a function
	$L^{\mathcal{F}}:\bigcup_{m=1}^{\infty} Z^m \to \mathcal{F}$ defined as
	$$L^{\mathcal{F}}(S) =\arg\min_{q\in \mathcal{F}} \|q - \hat{p}_S \|_{\mathcal{Y}(\mathcal{F})}.$$
\end{definition}

If the argmin is not unique, we may choose one arbitrarily.

\begin{theorem}[density estimation via empirical Yatracos minimizer]
	\label{thm:distributionVC}
	Let $\mathcal{F}$ be a class of probability distributions, and let $S\sim p^m$, where $p\in \fcal$. Then, with probability at least $1 - \delta$ we have
	$$\|p - L^{\mathcal{F}}(S)\|_{TV} \leq c \sqrt{\frac{v + \log \frac{1}{\delta}}{m}}       $$
	where $v$ is VC-dimension of $\mathcal{Y}(\mathcal{F})$, and $c$ is a universal constant.
\end{theorem}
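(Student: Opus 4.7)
The plan is to reduce the total variation error of the estimator to the $\mathcal{Y}(\fcal)$-distance between $p$ and its empirical distribution $\hat{p}_S$, and then invoke the uniform convergence bound from \Lemma{vc}.

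First, I would observe that both $p$ and $L^{\fcal}(S)$ belong to $\fcal$, so the identity proved right after Definition~\ref{def_yatrocas} gives
\[
\|p - L^{\fcal}(S)\|_{TV} = \|p - L^{\fcal}(S)\|_{\ycal(\fcal)}.
\]
Next, by the triangle inequality for the seminorm $\|\cdot\|_{\ycal(\fcal)}$,
\[
\|p - L^{\fcal}(S)\|_{\ycal(\fcal)} \leq \|p - \hat{p}_S\|_{\ycal(\fcal)} + \|\hat{p}_S - L^{\fcal}(S)\|_{\ycal(\fcal)}.
\]
The second term is controlled by the definition of the empirical Yatracos minimizer: since $p \in \fcal$, the choice $q = p$ is feasible in the minimization, so
\[
\|\hat{p}_S - L^{\fcal}(S)\|_{\ycal(\fcal)} \leq \|\hat{p}_S - p\|_{\ycal(\fcal)}.
\]
Combining these yields the key ``factor of two'' inequality
\[
\|p - L^{\fcal}(S)\|_{TV} \leq 2 \|p - \hat{p}_S\|_{\ycal(\fcal)}.
\]

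Finally, I would apply \Lemma{vc} with $\acal = \ycal(\fcal)$, whose VC-dimension equals $v$ by assumption. This gives that $\|p - \hat{p}_S\|_{\ycal(\fcal)} \geq \eps$ holds with probability at most $\exp(c_1 + c_2 v - c_3 m \eps^2)$. Setting this probability equal to $\delta$ and solving for $\eps$ produces $\eps = O\!\bigl(\sqrt{(v + \log(1/\delta))/m}\bigr)$, and combining with the factor-of-two bound above yields the claimed inequality (the extra factor of $2$ is absorbed into the universal constant $c$).

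There is no real obstacle here; the only point that deserves care is that the key identity between total variation and $\ycal(\fcal)$-distance applies only to pairs of distributions both lying in $\fcal$, which is exactly why the ``proper'' form of the estimator (outputting an element of $\fcal$) is used in Definition~\ref{eym}. Every other step is a direct application of the triangle inequality, the optimality of $L^{\fcal}(S)$, and the uniform convergence lemma.
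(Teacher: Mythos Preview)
Your proposal is correct and follows exactly the same approach as the paper's proof: convert $\|p-L^{\fcal}(S)\|_{TV}$ into $\|p-L^{\fcal}(S)\|_{\ycal(\fcal)}$ using that both lie in $\fcal$, apply the triangle inequality and the minimizing property of $L^{\fcal}(S)$ to get the factor-of-two bound $2\|p-\hat p_S\|_{\ycal(\fcal)}$, and finish with Lemma~\ref{lemma:vc}. The paper's argument is identical, only more tersely written.
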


We remark that variants of this result, without explicit dependence on the failure probability, is proved
implicitly in \cite{devroye_book} and 
also appears explicitly in \cite[Lemma~6]{logconcave}.

\begin{proof}
We have
	\begin{align*}
	\| p - L^{\mathcal{F}}(S)\|_{TV}  &=\|L^{\mathcal{F}}(S) - p \|_{\mathcal{Y}(\mathcal{F})}  
	\leq \|L^{\mathcal{F}}(S) - \hat{p}_S \|_{\mathcal{Y}(\mathcal{F})} + \|\hat{p}_S - p \|_{\mathcal{Y}(\mathcal{F})}   \\
	&\leq 2 \|p - \hat{p}_S \|_{\mathcal{Y}(\mathcal{F})}  \leq c \sqrt{\frac{v + \log \frac{1}{\delta}}{m}} \:.
	\end{align*}
	The equality is because $L^{\mathcal{F}}(S), p\in\fcal$.
	The first inequality is the triangle inequality.
	The second inequality is because
	$L^{\mathcal{F}}(S)$ is the empirical minimizer of the $\mathcal{Y}(\mathcal{F})$-distance.
	The third inequality holds by Lemma~\ref{lemma:vc}
	with probability $\geq1-\delta$.
\end{proof}

\begin{corollary}\label{cor:vcsample}
For any class $\fcal$, the sample complexity for learning $\fcal$ is bounded by 
$O\left(  (\vc(\ycal(\fcal)) + \log (1/\delta)) /\eps^2 \right)$.
\end{corollary}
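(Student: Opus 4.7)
The plan is to apply Theorem~\ref{thm:distributionVC} directly, using the empirical Yatracos minimizer $L^{\fcal}$ as the density estimation method, and then invert the tail bound to solve for the sample size needed to drive the error below $\eps$.

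Concretely, Theorem~\ref{thm:distributionVC} guarantees that on an i.i.d.\ sample $S \sim p^m$ of size $m$, with probability at least $1-\delta$,
\[
\|p - L^{\fcal}(S)\|_{TV} \leq c\sqrt{(v + \log(1/\delta))/m},
\]
where $v = \vc(\ycal(\fcal))$ and $c$ is the universal constant from Theorem~\ref{thm:distributionVC}. To make the right-hand side at most $\eps/2$ (using the factor-of-two slack between total variation distance and $L_1$ distance), I rearrange to get the sufficient condition $m \geq 4c^2 (v + \log(1/\delta))/\eps^2$. Setting $m$ equal to this quantity and invoking $\|f_1-f_2\|_1 = 2\|f_1-f_2\|_{TV}$ yields $\|p - L^{\fcal}(S)\|_1 \leq \eps$ with probability at least $1-\delta$, which is exactly the guarantee required by Definition~\ref{def:density_estimation}. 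Absorbing the constants into $O(\cdot)$ notation produces the claimed bound.

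There is essentially no obstacle here: the corollary is a direct translation of Theorem~\ref{thm:distributionVC} into the PAC-style language of Definition~\ref{def:density_estimation}, with the empirical Yatracos minimizer serving as the learner. The only bookkeeping point is the constant factor of two between $\|\cdot\|_{TV}$ and $\|\cdot\|_1$, which is absorbed into the implicit constant.
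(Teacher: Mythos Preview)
Your proposal is correct and matches the paper's approach: the paper states this corollary immediately after Theorem~\ref{thm:distributionVC} with no proof, treating it as the direct inversion of that bound exactly as you describe. Your bookkeeping on the factor of two between $\|\cdot\|_{TV}$ and $\|\cdot\|_1$ is fine, though the paper explicitly says it uses the two interchangeably and absorbs such constants throughout.
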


In view of Corollary~\ref{cor:vcsample}, to prove 
the sample complexity bounds for $\gcal_{d,1}$ and $\acal_{d,1}$, it remains to show upper bounds on the VC-dimensions of the Yatracos classes $\ycal(\gcal_{d,1})$
and $\ycal(\acal_{d,1})$.
We provide the proof for general Gaussians only; the proof for axis-aligned Gaussians is very similar.

For classes $\mathcal F$ and $\mathcal G$ of functions from $Z$ to $\R$, let
\[\operatorname{NN}(\mathcal{G})\coloneqq \left\{\{x : f(x)\geq0\}\textnormal{ for some }f\in \mathcal{G} \right\} \subseteq 2^Z,\]
and
\[\Delta \mathcal{F} \coloneqq \{f_1-f_2 : f_1, f_2 \in \mathcal{F} \}\:, \]
and notice that 
\[\mathcal{Y}(\fcal) = \operatorname{NN}(\Delta \fcal).\]
We upper bound the VC-dimension of
$\operatorname{NN}(\Delta \gcal_{d,1})$ via the following well known result in statistical learning theory, which first appeared in this form in~\cite[Theorem~7.2]{dudley_vectorspace} (see \cite[Lemma~4.2]{devroye_book} for a historical discussion).
\begin{theorem}[VC-dimension of vector spaces]
\label{dudley} Let ${\fcal}$ be an $n$-dimensional vector space of real-valued functions.
	Then $\vc(\operatorname{NN}(\fcal)) = n$.
\end{theorem}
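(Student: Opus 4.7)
The statement asserts two inequalities, and I would prove them separately, both by leveraging the linear structure of $\fcal$.

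For the \emph{upper bound} $\vc(\nn(\fcal))\le n$, the plan is a dimension-counting argument. Suppose $A=\{x_1,\ldots,x_{n+1}\}\subseteq Z$ is shattered by $\nn(\fcal)$. Consider the evaluation map $E:\fcal\to\R^{n+1}$ given by $E(f)=(f(x_1),\ldots,f(x_{n+1}))$. Since $\dim\fcal=n$, the image $E(\fcal)$ is a subspace of dimension at most $n$, so there is a nonzero $\alpha=(\alpha_1,\ldots,\alpha_{n+1})\in\R^{n+1}$ with $\sum_{i=1}^{n+1}\alpha_i f(x_i)=0$ for every $f\in\fcal$. After possibly negating $\alpha$, we may assume the index set $I_-\coloneqq\{i:\alpha_i<0\}$ is nonempty. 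Let $B\coloneqq\{x_i:i\in I_-\}$; since $A$ is shattered, there exists $f\in\fcal$ with $f(x_i)\ge0$ for $i\notin I_-$ and $f(x_i)<0$ for $i\in I_-$. Splitting the relation $\sum\alpha_i f(x_i)=0$ into the $\alpha_i\ge0$ and $\alpha_i<0$ parts, the first sum is $\ge0$ and the second sum is strictly positive (each term is a negative times a negative). This contradicts the relation and proves $\vc(\nn(\fcal))\le n$.

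For the \emph{lower bound} $\vc(\nn(\fcal))\ge n$, pick a basis $f_1,\ldots,f_n$ of $\fcal$ and define $T:Z\to\R^n$ by $T(x)=(f_1(x),\ldots,f_n(x))$. The first step is to show that the linear span of $T(Z)$ is all of $\R^n$: otherwise, there would exist a nonzero $\beta\in\R^n$ with $\sum_i\beta_i f_i(x)=0$ for every $x\in Z$, which contradicts the linear independence of the $f_i$'s. Consequently we can choose $x_1,\ldots,x_n\in Z$ such that the vectors $T(x_1),\ldots,T(x_n)$ are linearly independent, i.e., the $n\times n$ matrix $M=[f_i(x_j)]$ is invertible. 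Now to shatter $\{x_1,\ldots,x_n\}$, given any $B\subseteq[n]$ let $y_j=1$ if $j\notin B$ and $y_j=-1$ if $j\in B$, and solve $M^\transpose c=y$ for $c\in\R^n$; then $f=\sum c_if_i\in\fcal$ satisfies $f(x_j)=y_j$, so $\{x_j:f(x_j)\ge0\}=[n]\setminus B$. This shows the set is shattered.

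The main obstacle is the small bookkeeping issue in the upper bound: one must choose the sign of $\alpha$ (and correspondingly the dichotomy $B$) so that the sum giving the contradiction is not vacuously zero; that is, we need at least one strict inequality among the terms. Handling this by ensuring $I_-\ne\emptyset$ and using that $\operatorname{NN}$ is defined via the non-strict inequality $f(x)\ge0$ resolves the issue cleanly. Beyond this, the argument is a direct application of linear algebra, so no further technical machinery is required.
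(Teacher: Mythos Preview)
The paper does not prove this theorem; it merely states it as a classical result and cites Dudley~\cite[Theorem~7.2]{dudley_vectorspace} (with a pointer to \cite[Lemma~4.2]{devroye_book} for history), so there is no in-paper argument to compare against. Your proof is correct and is essentially the standard one: the upper bound is the usual linear-dependence (Radon-type) contradiction, and the lower bound exhibits $n$ points on which the evaluation matrix is invertible. One cosmetic remark: in the upper bound you set $B=\{x_i:i\in I_-\}$ but then invoke shattering to obtain $f$ with $\{x_i:f(x_i)\ge0\}=A\setminus B$; this is fine since shattering gives you every subset, but naming the subset you actually realise would read more smoothly.
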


Now let $h$ be the indicator function
for an arbitrary element in $\operatorname{NN}(f_1-f_2)$,
where $f_1,f_2 \in \gcal_{d,1}$.
Then $h$ is a $\{0,1\}$-valued function and we have:
\begin{align*}
h(x) & = \mathbbm{1}\{\mathcal{N}(\mu_1, \Sigma_1)(x) > \mathcal{N}(\mu_2, \Sigma_2)(x)\}\\
&  = \mathbbm{1}\{ \alpha_1 \exp(\frac{-1}{2}(x-\mu_1)^T\Sigma_1^{-1}(x-\mu_1)  ) >   \alpha_2 \exp(\frac{-1}{2}(x-\mu_2)^T\Sigma_2^{-1}(x-\mu_2)  ) \}\\
& = \mathbbm{1}\{ (x-\mu_1)^T\Sigma_1^{-1}(x-\mu_1) -(x-\mu_2)^T\Sigma_2^{-1}(x-\mu_2) - \log \frac{\alpha_2}{\alpha_1} >0 \}\:.
\end{align*}
The inner expression is a quadratic form, and the linear dimension of all quadratic functions is $O(d^2)$. 
Hence, by Theorem~\ref{dudley}, we have 
$\vc(\ycal(\gcal_{d,1})) = O(d^2)$.
Combined with Corollary~\ref{cor:vcsample}, this gives a sample complexity upper bound of $O(d^2/\eps^2)$ for learning 
$\gcal_{d,1}$, which is the main result of this section.
An $O(d/\eps^2)$ upper bound for $\acal_{d,1}$ can be proved similarly.


The problem with extending these results to mixtures of Gaussians is that it is not easy to bound the VC-dimension of the Yatrocas class of the family of mixtures of Gaussians.
It is an intriguing open problem whether
$\vc(\ycal(\gcal_{d,k})) = \widetilde{O}(k \vc(\ycal(\gcal_{d,1})))$.
One can also ask a more ambitious question:
is it true that for any class $\fcal$ of distributions, 
$\vc(\ycal(k\textnormal{-mix}(\fcal))) =\widetilde{O}(k \vc(\ycal(\fcal)))$ ?
We believe the answer to this latter question is no, but this is yet to be disproved.

The answers to these questions are unknown, so new ideas are required for density estimation of mixtures, described next.
First, in Section~\ref{sec:onedimensional} we discuss the 1-dimensional case, and we discuss the high-dimensional case in Sections~\ref{sec:mix} and~\ref{sec:compression}.

\section{Sample complexity upper bounds via piecewise polynomials}
\label{sec:onedimensional}
In this section we give  an $\widetilde{O}(k/\eps^2)$ upper bound for learning the class $\gcal_{1,k}$.
This was proved in~\cite{onedimensional},
which also gives a polynomial time algorithm for density estimation of this class.
The main idea is to approximate a Gaussian with a piecewise polynomial function.
For positive integers $D,t$, let $\pcal_{t,D}$ denote the class of density functions
that are piecewise polynomials with at most $t$ pieces, where each piece is a polynomial of degree at most $D$.
First, we give a sample complexity upper bound for learning $\pcal_{t,D}$ using the ideas from Section~\ref{sec:vc}.

We need to bound $\vc(\ycal(\pcal_{t,D}))=\vc(\nn(\Delta \pcal_{t,D}))$.
Note that $\Delta \pcal_{t,D} = \pcal_{t,D}$.
And since any $p \in \pcal_{t,D}$ has at most $tD$ roots and is continuous, any element in 
$\nn(\pcal_{t,D})$
is a union of at most $t D$ intervals.
The VC-dimension of the class of unions of at most $k$ intervals can be easily seen to be $O(k)$.
This gives $\vc(\ycal(\pcal_{t,D})) = O( t D)$, hence by Corollary~\ref{cor:vcsample}, the sample complexity of learning  $\pcal_{t,D}$ is $O(tD/\eps^2)$.

For any $g\in \gcal_{1,1}$ there exists $p\in \pcal_{3,O(\log(1/\eps))}$ with
$\|g - p\|_1 \leq \eps$.
(This is obtained by taking the Taylor polynomial for the main body of the Gaussian, and taking the zero polynomial for the two tails, see~\cite{onedimensional} for the details.)
Also, $k$-mix$(\pcal_{t,D}) \subseteq \pcal_{kt, D}$.
This implies that, for any $g\in \gcal_{1,k}$ there exists $p\in \pcal_{3k,O(\log(1/\eps))}$ with
$\|g - p\|_1 \leq \eps$.

Let $g$ be the target distribution.
Now, consider the empirical Yatracos minimizer (see Definition~\ref{eym}) for the class 
$\pcal_{3k,O(\log(1/\eps))}$.
Given samples from $g$, the minimizer `imagines' the samples are coming from $p$, and after taking 
 $O(k \log (1/\eps)/\eps^2)$ samples, outputs an estimate $\widehat{p}$ such that $\|\widehat{p}-p\|_1 \leq \eps$.
 Then, the triangle inequality gives 
\[\|\widehat{p}-g\|_1 \leq 
\|\widehat{p}-p\|_1 + \|\widehat{p}-g\| \leq 2\eps,\]
as required.

There is an issue with the above argument; our proof for Theorem~\ref{thm:distributionVC} assumed the samples are from a distribution in the known class of distributions ($\pcal_{3k,O(\log(1/\eps))}$ in this case),
whereas in this case, they are not.
However, one can amend the argument (by applying two careful triangle inequalities) to show that, if the samples are coming from a distribution $g$ that is not necessarily in $\fcal$, then with high probability the empirical Yatrocas minimizer outputs a distribution $L^{\mathcal{F}}(S)$ satisfying:
\begin{equation}\label{agnostic}
\|g - L^{\mathcal{F}}(S)\|_{TV} \leq 
3 \inf_{p \in \fcal} \|p - g\|_{TV}+
c \sqrt{\frac{v + \log \frac{1}{\delta}}{m}},
\end{equation}
which will be $O(\eps)$ in our case, as required
(see~\cite{ashtiani2017sample} for the proof of (\ref{agnostic})).
Such a result is called \emph{agnostic learning}, since it does not assume the target belongs to the known class, but only assumes it can be approximated well by some element of the class.

Unfortunately, the idea of piecewise polynomial approximation cannot be extended to higher dimensions, because to approximate a high-dimensional Gaussians, one needs a piecewise polynomial with either the degree or the number of pieces being exponential in the dimension.
The ideas for extending the bounds to higher dimensions are quite different and are described next.

\section{A generic upper bound for mixtures}
\label{sec:mix}


We consider a more general problem in this section.
Assume that we have a method to learn an arbitrary class $\mathcal{F}$. Does this mean that we can learn $k$-mix$(\mathcal{F})$? And if so, what is the sample complexity of this task?
We give an affirmative answer to the first question, and provide a bound for sample complexity of learning $k$-mix$(\mathcal{F})$.
As an application of this general result, we give an upper bound for the case of mixtures of Gaussians in high-dimensions.
This section is based on~\cite{ashtiani2017sample}.
%
%

\begin{theorem}[sample complexity of learning mixtures]\label{thm:mixtures}
	Assume that $\mathcal{F}$ can be learned
	with sample complexity $m_{\mathcal{F}}(\eps, \delta) = {\lambda(\mathcal F,\delta)}/{\eps^\alpha}$ for some $\alpha \geq 1$ and some function
	$\lambda(\mathcal F,\delta) = \Omega(\log(1/\delta))$. 
	Then there exists a density estimation method for
	$k$-mix($\mathcal{F}$)  requiring
	\(\displaystyle 
	O\left({k\log k \cdot m_{\mathcal F}(\eps, \frac{\delta}{3k}) }\Big/{\eps^{2}}\right)
	\) samples.
\end{theorem}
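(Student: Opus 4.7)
My plan is to follow the \emph{candidate-generation-plus-selection} paradigm, using the agnostic Yatracos-minimizer guarantee~\eqref{agnostic} as the selection primitive. The base learner for $\mathcal{F}$ is used as a black box to produce a finite list $\mathcal{M}$ of candidate mixtures, at least one of which is $O(\eps)$-close to the target $g=\sum_{i=1}^{k} w_i f_i$, and then~\eqref{agnostic} picks out a good candidate. The total sample complexity then decomposes as the samples used to generate $\mathcal{M}$ plus $O(\log|\mathcal{M}|/\eps^2)$ for the selection step.

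First I would reduce to the case $w_i \geq \eps/(3k)$ for all $i$; components of smaller weight contribute at most $\eps/3$ in total variation and can be ignored. Next, I would draw a sample $T$ of size $M=\widetilde{\Theta}\bigl((k/\eps)\cdot m\bigr)$ from $g$, where $m \coloneqq m_{\mathcal{F}}(\eps,\delta/(3k))$. A Chernoff bound then guarantees that for each surviving $i$ the sample $T$ contains some $m$-subset $S_i^{\star}\subseteq T$ consisting entirely of independent draws from $f_i$. Since $S_i^{\star}$ is unknown, we enumerate all $\binom{M}{m}$ size-$m$ subsets of $T$, feed each one to the base learner, and collect the outputs into a component pool $\mathcal{H}$. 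Had we fed $S_i^{\star}$, the learner would output (with failure probability $\delta/(3k)$) some $\tilde f_i\in\mathcal{H}$ with $\|\tilde f_i-f_i\|_{TV}\leq \eps$, and a union bound over $i\in[k]$ yields simultaneous success.

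With $\mathcal{H}$ in hand, I would form the candidate mixture class $\mathcal{M}\coloneqq \bigl\{\sum_{i=1}^{k}v_i h_i : (v_1,\dots,v_k)\in \mathcal{W},\ h_i\in\mathcal{H}\bigr\}$, where $\mathcal{W}$ is an $(\eps/k)$-net of $\Delta_k$ in $\ell_1$ of size at most $(3k/\eps)^k$. A triangle inequality over the $k$ component estimates and the nearest point of $\mathcal{W}$ to $(w_1,\dots,w_k)$ shows that $\mathcal{M}$ contains, with probability at least $1-2\delta/3$, some $\hat g$ with $\|\hat g - g\|_1 = O(\eps)$. Applying~\eqref{agnostic} to this finite class with a fresh sample of size $\Theta(\log|\mathcal{M}|/\eps^2)$ then extracts some $\hat g'\in\mathcal{M}$ satisfying $\|\hat g'-g\|_{TV}=O(\eps)$ with probability at least $1-\delta/3$.

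The main obstacle is the sample accounting at the end: one gets
$\log|\mathcal{M}|\leq k\log\binom{M}{m}+\log|\mathcal{W}| = O\bigl(km\log M + k\log(k/\eps)\bigr)$,
and to match the stated $O(k\log k\cdot m/\eps^2)$ bound one must carefully exploit the hypotheses $\alpha\geq 1$ and $\lambda(\mathcal{F},\cdot)=\Omega(\log(1/\cdot))$ in order to absorb the $\log M$ and $\log(k/\eps)$ factors---coming from enumerating the $\binom{M}{m}$ subsamples and from discretizing the simplex---into the single $\log k$ factor. The $\delta$ budget is spent in three places (the Chernoff subsample-size event, the $k$-fold union bound over base-learner failures, and the Yatracos-selector failure), which is precisely where the $\delta/(3k)$ inside $m_{\mathcal{F}}$ in the theorem statement comes from.
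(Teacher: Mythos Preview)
Your high-level plan---build a finite pool of candidate mixtures by brute-forcing over ``which samples came from which component,'' then select via a tournament---is exactly the paper's strategy. The divergence, and the gap, is in how you handle unequal mixing weights.

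You truncate components with $w_i<\eps/(3k)$ and then inflate the sample to $M=\widetilde\Theta\bigl((k/\eps)m\bigr)$ so that every surviving component contributes at least $m$ points. This inflation is what costs you: enumerating size-$m$ subsets of $T$ gives $\log\binom{M}{m}\asymp m\log(M/m)=m\log(k/\eps)$, hence $\log|\mathcal M|=\Theta\bigl(km\log(k/\eps)\bigr)$ and a selection cost of $\Theta\bigl(km\log(k/\eps)/\eps^2\bigr)$. Relative to the target $O(km\log k/\eps^2)$ you are off by a factor $1+\log(1/\eps)/\log k$, which is unbounded as $\eps\to 0$ for fixed $k$. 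The hypotheses $\alpha\ge 1$ and $\lambda(\mathcal F,\delta)=\Omega(\log(1/\delta))$ do \emph{not} absorb this: they only tell you $m$ is large (a lower bound), while you need an \emph{upper} bound on the selection cost in terms of $m$. In your scheme those hypotheses are in fact never invoked, which is a signal that something is missing.

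The paper's fix is different: it does \emph{not} oversample. It draws only $M=O(km)$ points and enumerates all $k$-colourings of them (so $\log|\mathcal M|=O(km\log k)$, exactly the right size). A component of weight $w_i$ then receives only $\Theta(w_i km)$ samples, which is generally fewer than $m$; the base learner is run on those and returns an estimate with error $\eps_i\asymp\bigl(\lambda/(w_i k m)\bigr)^{1/\alpha}=\eps/(w_ik)^{1/\alpha}$. The point is that this larger per-component error is paid for by the small weight: $\sum_i w_i\eps_i = \eps\sum_i w_i^{1-1/\alpha}/k^{1/\alpha}\le \eps$ by concavity, and this is precisely where $\alpha\ge 1$ is used (with $\lambda=\Omega(\log(1/\delta))$ handling the per-component confidence). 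So the role of the technical hypotheses is in the \emph{variable-precision error budget}, not in the final log-factor bookkeeping. If you replace your truncate-and-oversample step by this variable-precision idea, your argument goes through with the stated bound.
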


One may wonder about tightness of this theorem.
	In Theorem~2 in \cite{spherical}, it is shown that 
	if $\mathcal F$ is the class of spherical Gaussians,  we have
	$m_{k\textnormal{-mix}(\mathcal{F})}(\eps, \delta) = \Omega(k m_{\mathcal F}(\eps, \delta) )$, therefore, the factor of $k$ is necessary in general. However, it is not clear whether the additional factor of
	$\log k /\eps^2$ in the theorem is tight.

If we apply this theorem to the class $\fcal = \gcal_{d,1}$, which has sample complexity $O(d^2/\eps^2)$ as proved in Section~\ref{sec:vc}, we immediately obtain an upper bound of $\widetilde{O}(kd^2/\eps^4)$ for the sample complexity of learning $\gcal_{d,k}$,
and a sample complexity upper bound of $\widetilde{O}(kd/\eps^4)$ for $\acal_{d,k}$.

We now give a sketch of the proof of Theorem~\ref{thm:mixtures}.
Suppose the target distribution is $g = \sum_{i=1}^{k} w_i g_i$, where each $g_i \in \fcal$.
The $w_i$ are  called the \emph{mixing weights}, and the $g_i$ are called the \emph{components}.
Consider a die with $k$ faces, such that when you roll it, the $i$th face has probability $w_i$ of coming.
To generate a point according to $g$, one can roll this die, and if the $i$th face comes, generate a point according to distribution $g_i$.
So, any i.i.d.\ sample generated from $g$ can be coloured with $k$ colours, such that almost a $w_i$ fraction of points have colour $i$, and the points with colour $i$ are i.i.d.\ distributed as $g_i$.

Now, if the colouring was given to the algorithm, there was a clear way to proceed: estimate each of the $g_i$ using the $\fcal$-learner, 
and estimate $w_i$ by the proportion of points with colour $i$, and then output the resulting mixture.
The issue is that the colouring is not given to the algorithm. But, in principle, it can do an exhaustive search over all possible colourings, and `choose the best one.'

More precisely, the algorithm has two main steps.
In the first step we generate a finite set of `candidate distributions,' such that at least one of them is $\eps$-close to $g$ in $L_1$ distance.
These candidates are of the form $\sum_{i=1}^{k} \widehat{w}_i \widehat{G}_i$, where the $\widehat{G}_i$'s are extracted from samples and are estimates for the real components $G_i$, 
and the $\widehat{w}_i$'s come from a fixed discretization of $\Delta_k$, and are estimates for the real mixing weights $w_i$.
In the second step, we take lots of  additional samples and use the following result
to choose the best one among them, giving a distribution that is $O(\eps)$-close to $g$.

The following theorem provides an algorithm that chooses the almost-best one among a finite set of candidate distributions.
It follows from \cite[Theorem~6.3]{devroye_book} and a standard Chernoff bound.
\begin{theorem}[handpicking from a finite set of candidates]
\label{thm:candidates}
	Suppose we are given $M$ candidate distributions $f_1,\dots,f_M$ and we have access to i.i.d.\ samples from an unknown distribution $g$.
	Then there exists an algorithm that given the $f_i$'s and  $\eps>0$, takes 
	$\log (3M^2/\delta)/2\eps^2$ samples from $g$, and with probability  $\geq 1-\delta$ outputs an index $j\in[M]$ such that 
	\[
	\|f_j-g\|_1 \leq 3 \min_{i\in[M]} \|f_i-g\|_1 + 4\eps \:.
	\]
\end{theorem}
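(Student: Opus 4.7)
The plan is to apply the Scheffé tournament (a.k.a.\ the Yatracos / minimum-distance estimator) to the finite family $f_1,\dots,f_M$, using the empirical distribution $\hat{p}_S$ as a proxy for $g$. For each ordered pair $i\neq j$, define the Scheffé set $A_{ij}\coloneqq\{x:f_i(x)>f_j(x)\}$, and recall the identity $\|f_i-f_j\|_{TV}=f_i(A_{ij})-f_j(A_{ij})$. Given $m\coloneqq\log(3M^2/\delta)/(2\eps^2)$ i.i.d.\ samples $S$ from $g$, compute the empirical masses $\hat{p}_S(A_{ij})$ and output
\[
\hat\jmath \coloneqq \arg\min_{i\in[M]}\ \max_{j\in[M]}\ \bigl|f_i(A_{ij})-\hat{p}_S(A_{ij})\bigr|,
\]
breaking ties arbitrarily.

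The first step is a uniform deviation bound. Since $\hat{p}_S(A_{ij})$ is the average of $m$ Bernoulli variables with mean $g(A_{ij})$, Hoeffding's inequality gives $\Pr[|\hat{p}_S(A_{ij})-g(A_{ij})|>\eps]\le 2e^{-2m\eps^2}\le 2\delta/(3M^2)$. A union bound over the at most $\binom{M}{2}$ distinct Scheffé sets then shows that the good event
\[
\mathcal E\coloneqq\bigl\{\,|\hat{p}_S(A_{ij})-g(A_{ij})|\le\eps\text{ for all }i,j\,\bigr\}
\]
holds with probability at least $1-\delta$; this is precisely where the stated sample size enters.

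The remainder is deterministic analysis on $\mathcal E$. Let $i^\star\coloneqq\arg\min_i\|f_i-g\|_{TV}$ and $\opt\coloneqq\|f_{i^\star}-g\|_{TV}$; since $|f_{i^\star}(A)-g(A)|\le\opt$ for every measurable $A$, combining with $\mathcal E$ gives $\max_j|f_{i^\star}(A_{i^\star j})-\hat{p}_S(A_{i^\star j})|\le\opt+\eps$. By definition of $\hat\jmath$, the same bound holds with $i^\star$ replaced by $\hat\jmath$, so specializing to $j=i^\star$ and invoking $\mathcal E$ one more time yields $|f_{\hat\jmath}(A_{\hat\jmath,i^\star})-g(A_{\hat\jmath,i^\star})|\le\opt+2\eps$. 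Plugging this into the Scheffé identity,
\[
\|f_{\hat\jmath}-f_{i^\star}\|_{TV}=f_{\hat\jmath}(A_{\hat\jmath,i^\star})-f_{i^\star}(A_{\hat\jmath,i^\star})\le(\opt+2\eps)+\opt,
\]
and one final triangle inequality gives $\|f_{\hat\jmath}-g\|_{TV}\le 3\opt+2\eps$, which doubles to the claimed $L_1$ bound $\|f_{\hat\jmath}-g\|_1\le 3\min_i\|f_i-g\|_1+4\eps$.

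The only place requiring care is tracking the constants through the chain of triangle inequalities; the factor $3$ is the well-known intrinsic loss of the Scheffé tournament and cannot be improved by this type of argument. The sample size grows only logarithmically in $M$, which is what makes the result useful inside the proof of Theorem~\ref{thm:mixtures}, where the candidate set is typically exponentially large in the problem parameters.
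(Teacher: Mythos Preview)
Your proof is correct and is exactly the argument the paper defers to: the paper simply cites \cite[Theorem~6.3]{devroye_book} (the minimum-distance/Scheff\'e estimator) together with a ``standard Chernoff bound,'' and you have spelled out precisely that derivation, including the union bound over the $\binom{M}{2}$ Scheff\'e sets that yields the stated sample size and the chain of triangle inequalities that produces the factor $3$ and the additive $4\eps$ in $L_1$.
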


We now analyze the sample complexity of our proposed algorithm.
First consider the simpler case that all mixing weights are equal to $1/k$.
To estimate $g$ within distance $\eps$, it suffices to estimate each $g_i$ within distance $\eps$.
Therefore, we need $ C k m_{\fcal}(\eps,\delta/k)$ total data points from $g$, with some large constant $C$, so that we get $ \Omega(m_{\fcal}(\eps,\delta/k))$
samples from each $g_i$ with probability $\geq 1-\delta$.
For each fixed way of colouring these 
$ C k m_{\fcal}(\eps,\delta/k)$ data points with $k$ colours, we provide the points of each colour to the $\fcal$-learner, and get an estimate $\widehat{g_i}$, and then we add $\sum_{i=1}^{k} \frac {1}{k} \widehat{g_i}$ to the set of candidate distributions (recall that we have assumed the mixture weights are $1/k$).
Hence, the total number of candidate distributions is 
$M= k^ {C k m_{\fcal}(\eps,\delta/k)}
= \exp (C k m_{\fcal}(\eps,\delta/k) \log k)$.

We now show that at least one of the candidate distributions is $O(\eps)$-close to the target.
Consider the colouring that assigns points to components correctly. Then the $\fcal$-learner would provide us with $(\widehat{g_i})_{i=1}^k$ that each is $\eps$-close to the corresponding $g_i$ with probability $\geq 1-\delta/k$.
So, by the union bound, they are simultaneously close, with probability $\geq 1-\delta$.
Thus, when we apply the algorithm of Theorem~\ref{thm:candidates},
with probability $\geq 1-\delta$ it provides us with one of the candidate distributions that is $7\eps$-close to the target.
The total sample complexity of the whole algorithm is thus
\[
O(k m_{\fcal}(\eps,\delta/k)) + O (\log M / \eps^2)
=O\left({k\log k \cdot m_{\mathcal F}(\eps, \frac{\delta}{3k}) }\Big/{\eps^{2}}\right),
\]
as required.

The general case of arbitrary mixing weights brings two challenges: first, we do not know the weights, and so we also do an exhaustive search over a finite fine grid on the simplex $\Delta_k$ to make sure that at least one of the candidate distributions also gets the weights right; it turns out that this does not increase the sample complexity by more than a constant factor.
The more important problem is that, for components with very small weight, we may not get enough samples if we take a total of $ C k m_{\fcal}(\eps,\delta/k)$ samples from the mixture.
The solution is to have different precision for different components: from small-weight components we will have fewer data points, so we will estimate them with larger error; this is compensated by the fact that their weight is small, so the effect of this error in the total estimation error can be controlled.
Here is the place that, for the error controlling calculations to work out, we need the technical conditions in the theorem, namely that
$m_{\mathcal{F}}(\eps, \delta) = {\lambda(\mathcal F,\delta)}/{\eps^\alpha}$ for some $\alpha \geq 1$ and that $\lambda(\mathcal F,\delta) = \Omega(\log(1/\delta))$.
See~\cite{ashtiani2017sample} for the details..

\section{Sample complexity upper bounds via compression schemes}
\label{sec:compression}
The method of previous section would give sample complexity upper bounds of $\widetilde{O}(kd/\eps^4)$ for learning $\acal_{d,k}$,
and $\widetilde{O}(kd^2/\eps^4)$ for learning $\gcal_{d,k}$. In this section, we show how the work of ~\cite{ashtiani2017agnostic} improves these to 
$\widetilde{O}(kd/\eps^2)$ and
$\widetilde{O}(kd^2/\eps^2)$
using a technique called `compression.'
As before, let $\mathcal{F}$ be a class of distributions over a domain $Z$.


\begin{definition}[distribution decoder]
	A \emph{distribution decoder} for $\mathcal{F}$ is a deterministic function 
	$\mathcal{J}:\bigcup_{n=0}^{\infty} Z^n \times \bigcup_{n=0}^{\infty} \{0,1\}^n 
	\rightarrow \mathcal{F}$, which takes a finite sequence of elements of $Z$ and a finite sequence of bits, and outputs a member of $\mathcal{F}$. 
\end{definition}

%

\begin{definition} 	[distribution compression scheme]\label{def_compression}
	Let $\tau,t,m:(0,1)\rightarrow \mathbb{Z}_{\geq0}$
	be functions.
	We say  $\mathcal{F}$ admits \emph{$(\tau,t,m)$-compression} if there exists a decoder $\mathcal{J}$ for $\mathcal{F}$ such that for any distribution $g \in \mathcal{F}$ the following holds:
	
	\setlength{\leftskip}{0.5cm}
	\setlength{\rightskip}{0.5cm}
	For any $\eps \in (0,1)$, if $S\sim g^{m(\eps)}$, then with probability at least $2/3$, there exists a sequence $L$ of at most $\tau(\eps)$ elements of $S$, and a sequence $B$ of at most $t(\eps)$ bits, such that $\|\mathcal{J}(L,B)-g\|_1\leq \eps$.
\end{definition}

Essentially, the definition asserts that with high probability, there should be a (small) subset of $S$ and some (small number of) additional bits, from which $g$ can be reconstructed, or `decoded.'
We say that the distribution $g$ is `encoded'
with $L$ and $B$,
and in general we would like to have a compression scheme of a small size, for a reason that will be clarified soon.


\begin{remark}\label{remark_probability}
	In the above definition  we required the probability of existence of $L$ and $B$ to be at least 2/3, but one can boost this probability to  $1-\delta$ by generating a sample of size $m(\eps)\log (1/ \delta)$.
\end{remark}

We next establish a connection between compression and learning, and also show some properties of compression schemes.
The proofs can be found in~\cite{ashtiani2017agnostic}.
\begin{lemma}[compression implies learning]
\label{thm:compression}
	Suppose $\mathcal{F}$ admits $(\tau,t,m)$-compression. 
	Let $\tau'(\eps)\coloneqq \tau(\eps/6)+t(\eps/6)$.
	Then $\mathcal{F}$ can be learned using 
	\begin{align*}
	O\left(
	m\left(\frac \eps 6\right) \log \frac{1}{\delta} + \frac{\tau'(\eps) \log (m\left( \frac \eps 6\right) \log(1/\delta)) + \log(1/\delta)}{\eps^2} 
	\right) = 
	\widetilde{O}
	\left(
	m\left(\frac \eps 6\right)  + \frac{\tau'(\eps)}{\eps^2} 
	\right) 
	\end{align*}
	samples.
\end{lemma}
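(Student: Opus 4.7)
\smallskip
\noindent\textbf{Proof proposal.}
The strategy is to turn the existence of a short encoding (guaranteed by the compression assumption) into a finite set of candidate distributions, and then use the handpicking procedure of \Theorem{candidates} to select a nearly-best candidate. Concretely, the learner should proceed in two phases: (i) draw a sample large enough to \emph{contain} a valid encoding of a good approximation of $g$ with high probability, enumerate all possible encodings to produce the candidate pool, and (ii) draw a second independent sample and run the handpicking algorithm on the candidate pool.

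For phase (i), set $\eps_0 \coloneqq \eps/6$ and draw $N = m(\eps_0)\log(1/\delta)$ i.i.d.\ samples $S$ from $g$. By \Definition{compression} combined with the probability-boosting argument of Remark~\ref{remark_probability}, with probability $\geq 1-\delta/2$ there exist a sub-sequence $L\subseteq S$ with $|L|\leq \tau(\eps_0)$ and a bit string $B$ with $|B|\leq t(\eps_0)$ such that $\|\mathcal{J}(L,B)-g\|_1\leq \eps_0$. Define the candidate set
\[
\mathcal{C} \coloneqq \{\mathcal{J}(L',B') : L' \text{ ordered subsequence of } S \text{ of length } \leq \tau(\eps_0),\ B'\in\{0,1\}^{\leq t(\eps_0)}\}.
\]
Then
\[
|\mathcal{C}| \leq N^{\tau(\eps_0)}\cdot 2^{t(\eps_0)+1} \leq 2\cdot \bigl(N\bigr)^{\tau(\eps_0)+t(\eps_0)} = 2\cdot N^{\tau'(\eps)},
\]
so $\log |\mathcal{C}| = O\bigl(\tau'(\eps)\log N\bigr)$, and the good candidate $\mathcal{J}(L,B)$ lies in $\mathcal{C}$ with probability at least $1-\delta/2$.

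For phase (ii), we invoke \Theorem{candidates} with $M = |\mathcal{C}|$, failure probability $\delta/2$, and tolerance $\eps_0$. This consumes an additional
\[
\frac{\log(3|\mathcal{C}|^2/\delta)}{2\eps_0^2} = O\!\left(\frac{\tau'(\eps)\log N + \log(1/\delta)}{\eps^2}\right)
\]
samples and, with probability $\geq 1-\delta/2$, returns an $f_j\in\mathcal{C}$ with
\[
\|f_j - g\|_1 \leq 3\min_{f\in\mathcal{C}}\|f-g\|_1 + 4\eps_0 \leq 3\eps_0 + 4\eps_0 = 7\eps_0 < \eps.
\]
A union bound over the two failure events yields the overall success probability $\geq 1-\delta$, and the total sample complexity matches the claim after substituting $N = m(\eps/6)\log(1/\delta)$.

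\smallskip
\noindent\textbf{Main obstacles.} The proof is essentially bookkeeping once the two-phase structure is in place; the only non-routine point is the bound $|\mathcal{C}|\leq N^{\tau'(\eps)}$, which relies on the fact that we need only enumerate \emph{ordered} subsequences of the sample together with bit strings, so the log-candidate count grows like $\tau'(\eps)\log m(\eps_0)$ rather than, say, $\tau'(\eps)\log(1/\eps)$. The approximation constants ($\eps/6$, the factor of $3$ in \Theorem{candidates}, the factor of $4$ on the additive slack) must be chosen consistently so that the final guarantee is $\eps$, but this is a routine calibration once the structure above is fixed.
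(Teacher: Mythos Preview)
Your proposal is correct and matches the paper's approach exactly (the paper only sketches the proof: enumerate all possible $(L,B)$ pairs from the sample to form a candidate pool, then apply Theorem~\ref{thm:candidates}). One arithmetic slip to fix: with $\eps_0=\eps/6$ you obtain $7\eps_0 = 7\eps/6 > \eps$, not $<\eps$; either run the handpicking step with tolerance $\eps/8$ (so that $3\cdot\eps/6 + 4\cdot\eps/8 = \eps$, leaving the asymptotic sample count unchanged) or simply note that an $O(\eps)$-learner becomes an $\eps$-learner after rescaling, which is what the paper implicitly does elsewhere.
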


The proof resembles  that for Theorem~\ref{thm:mixtures}: perform an exhaustive search over all possibilities for the `defining sequences' $L,B$ to generate some candidates; one of these candidates would give the `correct' $L,B$;
then apply Theorem~\ref{thm:candidates} to find the best one among the candidates.

Compression schemes have two nice closure properties.
First, if a class  $\fcal$ of distributions can be compressed,
then the class of distributions that are formed by taking products of distributions in $\fcal$ can also be compressed.
For a class $\fcal$ of distributions, we define
\( \fcal^d \coloneqq 
\left\{ \prod_{i=1}^{d} p_i : p_1,\dots,p_d \in \fcal \right\}.
\)
The proof of the following lemma is not too difficult.

\begin{lemma} [compressing product distributions]
	\label{lem:product_compress}
	If $\mathcal{F}$ admits $(\tau(\eps),t(\eps), m(\eps))$-compression, then $\mathcal{F}^d$ admits $(d \tau(\eps/d),d t(\eps/d), m(\eps/d)\log (3d))$-compression.
\end{lemma}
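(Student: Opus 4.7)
My plan is to exploit the product structure by applying the given compression scheme to each coordinate independently, then gluing the per-coordinate compressions into one for $\mathcal{F}^d$ via a union bound. Let $g = \prod_{i=1}^d g_i \in \mathcal{F}^d$, and let $S = (z_1,\dots,z_N)$ be an i.i.d.\ sample from $g$ of size $N = m(\eps/d)\log(3d)$, with $z_j = (z_{j,1},\dots,z_{j,d})\in Z^d$. The key observation is that for each coordinate $i\in[d]$, the marginal sequence $(z_{j,i})_{j=1}^N$ is i.i.d.\ from $g_i\in\mathcal{F}$. So I would invoke the compression scheme for $\mathcal{F}$ with target accuracy $\eps/d$ on each coordinate, using Remark~\ref{remark_probability} to boost the per-coordinate success probability to at least $1 - 1/(3d)$; this is precisely what the $\log(3d)$ factor in $N$ pays for. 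A union bound over the $d$ coordinates then gives that all of them succeed simultaneously with probability at least $2/3$, yielding for each $i$ a subsequence $L_i$ (of the $i$-th marginal) of length at most $\tau(\eps/d)$ and bits $B_i$ of length at most $t(\eps/d)$ such that $\|\mathcal{J}_0(L_i,B_i)-g_i\|_1\leq\eps/d$, where $\mathcal{J}_0$ denotes the decoder for $\mathcal{F}$.

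Next I would assemble a single compression for $\mathcal{F}^d$. I form the sequence $L$ by concatenating $d$ blocks, each of length $\tau(\eps/d)$, where the $i$-th block consists of elements of $S$ whose $i$-th coordinates realize the subsequence $L_i$ (padding with a canonical element of $S$ if the per-coordinate compression used fewer). This gives $|L|\leq d\tau(\eps/d)$ elements of $S$, as required. I take $B$ to be the concatenation $B_1 B_2\cdots B_d$, of length at most $dt(\eps/d)$ bits, and define the product decoder $\mathcal{J}^d(L,B)$ to split $L$ and $B$ into these $d$ blocks in the canonical way, invoke $\mathcal{J}_0$ on each, and output $\prod_{i=1}^d \mathcal{J}_0(L_i,B_i)\in\mathcal{F}^d$.

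To bound $\|\mathcal{J}^d(L,B)-g\|_1$ I would invoke the standard product-distribution subadditivity
\[\Bigl\lVert\prod_{i=1}^d p_i - \prod_{i=1}^d q_i\Bigr\rVert_1 \leq \sum_{i=1}^d \|p_i-q_i\|_1,\]
proved by a routine hybrid/telescoping argument that swaps one marginal at a time. Applied with $p_i=\mathcal{J}_0(L_i,B_i)$ and $q_i=g_i$ this gives total error at most $d\cdot(\eps/d)=\eps$, completing the verification. The only nuisance in executing the plan is the bookkeeping when a per-coordinate compression uses strictly fewer than $\tau(\eps/d)$ elements or $t(\eps/d)$ bits (handled by padding both $L$ and $B$ to fixed per-block sizes so that the decoder can parse them unambiguously); the closest thing to a real obstacle is the invocation of the hybrid inequality, which is itself entirely routine.
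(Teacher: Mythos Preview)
Your proof is correct and is precisely the natural argument the paper has in mind: the paper does not actually spell out a proof of this lemma, remarking only that ``the proof of the following lemma is not too difficult,'' and your approach---compress each marginal at accuracy $\eps/d$, boost the per-coordinate success probability to $1-1/(3d)$ via Remark~\ref{remark_probability}, union-bound over the $d$ coordinates, lift each per-coordinate subsequence back to full sample points, and control the product error by the telescoping inequality $\|\prod p_i-\prod q_i\|_1\le\sum\|p_i-q_i\|_1$---is exactly the intended one.
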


Second, if a class  $\fcal$ of distributions can be compressed,
then the class of mixtures of distributions in $\fcal$ can also be compressed.


\begin{lemma} [compressing mixtures]
\label{lem:compressmixtures}
If $\fcal$ admits $(\tau(\eps), t(\eps), m(\eps))$-compression, then $\kmix(\fcal)$ admits $(k\tau(\eps/3), kt(\eps/3) + k \log_2 (4k/\eps)), 48 m(\eps/3) k \log(6k) / \eps)$-compression.
\end{lemma}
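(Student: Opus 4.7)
The plan is to show that a compression scheme for $\fcal$ lifts to one for $\kmix(\fcal)$ by treating each mixture component separately. Given the target $g = \sum_{i=1}^k w_i g_i$, I call component $i$ \emph{heavy} if $w_i \geq \eps/(6k)$ and \emph{light} otherwise. Light components contribute total weight at most $\eps/6$, so replacing each of them by an arbitrary fixed distribution in $\fcal$ introduces at most $2 \cdot \eps/6 = \eps/3$ to the overall $L_1$ distance; they can therefore be ignored by the encoder. The encoding will consist of, for each heavy component, a short sample subset and a few bits delivered by the $\fcal$-compression scheme, plus an explicit discretized description of the mixing weights.

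First I would draw $S \sim g^{N}$ of size $N = 48 m(\eps/3) k \log(6k) / \eps$. The expected number of points from a heavy component $i$ is $N w_i \geq 8 m(\eps/3) \log(6k)$, so a standard Chernoff bound per component, together with a union bound over the at most $k$ heavy components, shows that with probability $\geq 5/6$ every heavy component has at least $m(\eps/3) \log(6k)$ of its own samples in $S$. Conditional on this event, for each heavy $i$ I can invoke the boosted form of the compression guarantee (Remark~\ref{remark_probability}) with failure probability $1/(6k)$, so that by another union bound, with overall success probability $\geq 2/3$ I obtain sequences $L_i$ of at most $\tau(\eps/3)$ elements of $S$ and bit-strings $B_i$ of length at most $t(\eps/3)$ with $\|\mathcal{J}(L_i, B_i) - g_i\|_1 \leq \eps/3$.

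For the weights, the encoder rounds each $w_i$ to the nearest multiple of $\eps/(4k)$ and projects the result back onto $\Delta_k$; this uses $k \log_2(4k/\eps)$ bits and yields $\hat w$ with $\sum_i |\hat w_i - w_i| \leq \eps/3$. Concatenating the $L_i$'s (each padded by repetition to exactly $\tau(\eps/3)$ elements, which is harmless since $L$ is a sequence) produces $L$ with $|L| \leq k \tau(\eps/3)$, and the bit string $B$ is the concatenation of the $B_i$'s (each padded to $t(\eps/3)$ bits) with the weight description, giving $|B| \leq k t(\eps/3) + k \log_2(4k/\eps)$. The decoder parses $L, B$ into the blocks, reconstructs $\hat g_i = \mathcal{J}(L_i, B_i)$ for each heavy $i$, picks an arbitrary $\hat g_i \in \fcal$ for each light $i$, and outputs $\hat g = \sum_i \hat w_i \hat g_i$. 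The triangle inequality then gives
\begin{align*}
\|\hat g - g\|_1 \leq \sum_i |\hat w_i - w_i| \;+\; \sum_{i\text{ heavy}} w_i \,\|\hat g_i - g_i\|_1 \;+\; \sum_{i\text{ light}} 2 w_i \;\leq\; \tfrac{\eps}{3} + \tfrac{\eps}{3} + \tfrac{\eps}{3} = \eps.
\end{align*}

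The main obstacle is purely bookkeeping: making the concatenated $(L,B)$ uniquely parseable without spending extra bits, and tuning the heavy/light threshold, the weight grid precision, and the per-component compression failure probability so that all three sources of $L_1$ error sum to at most $\eps$ while all three failure events sum to at most $1/3$. The factor $48 \log(6k)$ in the sample bound is exactly what the Chernoff step and Remark~\ref{remark_probability} demand in order to make the per-heavy-component subsample reach the boosted size $m(\eps/3)\log(6k)$ with high probability.
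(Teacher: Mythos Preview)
Your proposal is correct and matches the paper's approach exactly. The paper gives only a one-sentence sketch (``take enough samples so that you have enough samples from each component, and also encode the weights of the mixture using the additional bits''), and your write-up is precisely a fleshed-out version of that idea: threshold into heavy/light components, use a Chernoff plus union bound to guarantee each heavy component receives its boosted $m(\eps/3)\log(6k)$ subsample, compress each heavy component with the $\fcal$-scheme, discretize the weights on an $\eps/(4k)$ grid, and combine the three error terms via the triangle inequality. The only cosmetic point is that the decoder need not be told which components are light---it can simply apply $\mathcal{J}$ to every block and rely on the smallness of $\hat w_i$ for light $i$; your error bound already accounts for this via the $2\sum_{\text{light}} w_i$ term.
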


The proof of this lemma also resembles that for Theorem~\ref{thm:mixtures}: just take enough samples so that you have enough samples from each component, and also encode the weights of the mixture using the additional bits.


One can easily show that a single 1-dimensional Gaussian can be compressed.

\begin{lemma}[compressing 1-dimensional Gaussians]
	\label{thm:1gaussiancompression}
	The class $\gcal_{1,1}$ admits $(2, 0, O(1/\eps))$-compression.
\end{lemma}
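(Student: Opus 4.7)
The plan is to exhibit a decoder $\mathcal{J}$ that takes two real samples (and no bits) and returns a univariate Gaussian, then show that, for any target $g=\ncal(\mu,\sigma^2)$, an i.i.d.\ sample $S$ of size $O(1/\eps)$ contains \emph{some} pair of points that the decoder maps to a Gaussian within $L_1$-distance $\eps$ of $g$. Concretely, I define
\[
\mathcal{J}(x_1,x_2) \coloneqq \ncal\!\left(\tfrac{x_1+x_2}{2},\, \left(\tfrac{x_2-x_1}{2}\right)^2\right),
\]
i.e.\ the midpoint becomes the mean and the half-gap becomes the standard deviation. The intuition is that if we can find $x_1\in S$ close to $\mu-\sigma$ and $x_2\in S$ close to $\mu+\sigma$, then $\mathcal{J}(x_1,x_2)$ recovers both $\mu$ and $\sigma$ accurately.

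Fix a small constant $c>0$ to be chosen later and set $I_-=[\mu-\sigma-c\eps\sigma,\,\mu-\sigma+c\eps\sigma]$, $I_+=[\mu+\sigma-c\eps\sigma,\,\mu+\sigma+c\eps\sigma]$. The density of $g$ at $\mu\pm\sigma$ equals $\frac{1}{\sigma\sqrt{2\pi}}e^{-1/2}$, so each interval has $g$-mass at least $p=\Omega(\eps)$ (uniformly in $\mu,\sigma$) once $\eps$ is small. With $m=C/\eps$ samples for a sufficiently large absolute constant $C$, the probability that $S\cap I_-=\emptyset$ is at most $(1-p)^m\le e^{-pm}\le 1/6$, and the same for $I_+$. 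A union bound gives that both intersections are nonempty with probability at least $2/3$; in that event set $L=(x_1,x_2)$ with $x_1\in S\cap I_-$ and $x_2\in S\cap I_+$.

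It remains to bound $\|\mathcal{J}(L,\emptyset)-g\|_1$. The estimators $\hat\mu=(x_1+x_2)/2$ and $\hat\sigma=(x_2-x_1)/2$ satisfy $|\hat\mu-\mu|\le c\eps\sigma$ and $|\hat\sigma-\sigma|\le c\eps\sigma$. The KL divergence between two univariate Gaussians is known in closed form, and combining it with Pinsker's inequality (or by direct computation for $\sigma_1\approx\sigma_2$) yields
\[
\|\ncal(\hat\mu,\hat\sigma^2)-\ncal(\mu,\sigma^2)\|_{TV} \;=\; O\!\left(\frac{|\hat\mu-\mu|}{\sigma}+\frac{|\hat\sigma-\sigma|}{\sigma}\right) \;=\; O(c\eps).
\]
Picking $c$ small enough forces the $L_1$-distance below $\eps$.

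There is no deep obstacle; the only routine care required is juggling the two constants $c$ and $C$ simultaneously, so that (i) the Gaussian perturbation bound gives $L_1$-error $\le\eps$, and (ii) with probability at least $2/3$ a sample of size $C/\eps$ hits both of the $O(c\eps\sigma)$-windows around $\mu\pm\sigma$. Once the constants are matched, we have produced a $(2,0,O(1/\eps))$-compression scheme for $\gcal_{1,1}$, as required.
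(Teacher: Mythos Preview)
Your proposal is correct and follows essentially the same approach as the paper: the paper defines the identical decoder $\widehat\mu=(x_1+x_2)/2$, $\widehat\sigma=(x_2-x_1)/2$, argues that $C/\eps$ samples hit the windows $[\mu-(1\pm\eps)\sigma]$ and $[\mu+(1\mp\eps)\sigma]$ with high probability, and then states that $\|\ncal(\widehat\mu,\widehat\sigma^2)-\ncal(\mu,\sigma^2)\|_1=O(\eps)$. You supply a bit more detail (the KL/Pinsker step and the explicit constant $c$ to get error exactly $\le\eps$ rather than $O(\eps)$), but the argument is the same.
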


The proof is simple: given $C/\eps$ i.i.d.\ samples from a Gaussian $\ncal(\mu,\sigma^2)$ for a large constant $C$, with high probability there exists two generated points $x_1,x_2$ with 
$x_1 \in [\mu - (1+\eps)\sigma, \mu - (1-\eps)\sigma]$
and
$x_2 \in [\mu + (1-\eps)\sigma, \mu + (1+\eps)\sigma]$.
The decoder estimates $\widehat{\mu} = (x_1+x_2)/2$ and $\widehat{\sigma} = (x_2-x_1)/2$.
It is not hard to verify that $\|\ncal(\widehat{\mu},\widehat{\sigma})-\ncal(\mu,\sigma)\|_1=O(\eps)$.

Using the above properties, one can show a tight upper bound on the sample complexity of learning mixtures of axis-aligned Gaussians.

\begin{theorem}[learning mixtures of axis-aligned  Gaussians]
\label{maincor}
	The class $\acal_{d,k}$  of mixtures of $k$ axis-aligned Gaussians in $\mathbb{R}^d$ can be learned using 
	$\widetilde{O}(kd/\eps^2)$ many samples.
\end{theorem}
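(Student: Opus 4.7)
The plan is to chain together the three compression results proved earlier in this section (Lemmas \ref{thm:1gaussiancompression}, \ref{lem:product_compress}, and \ref{lem:compressmixtures}) and then apply Lemma \ref{thm:compression} to translate the resulting compression scheme into a learning algorithm. The starting observation is that an axis-aligned Gaussian in $\R^d$ is exactly a product of $d$ one-dimensional Gaussians, so $\acal_{d,1} = \gcal_{1,1}^d$, and $\acal_{d,k} = \kmix(\gcal_{1,1}^d)$.

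First I would invoke Lemma \ref{thm:1gaussiancompression} to get a $(2,0,O(1/\eps))$-compression scheme for $\gcal_{1,1}$. Next, I would feed this into the product closure (Lemma \ref{lem:product_compress}) to get a $(2d,\ 0,\ O((d/\eps)\log(3d)))$-compression scheme for $\acal_{d,1} = \gcal_{1,1}^d$; the parameter $\eps$ is rescaled by a factor of $d$ inside the product lemma, but the $\eps$-dependence of $m$ is linear, so the sample size only grows by an $\widetilde O(d)$ factor. Then I would apply the mixture closure (Lemma \ref{lem:compressmixtures}) to $\acal_{d,1}$, yielding a scheme for $\kmix(\acal_{d,1}) = \acal_{d,k}$ with parameters
\[
\tau(\eps) = 2kd, \qquad t(\eps) = k\log_2(12k/\eps), \qquad m(\eps) = \widetilde O(kd/\eps^2),
\]
where I have absorbed the $\eps/3$ rescalings into the $\widetilde O$ notation and the constant in $\tau$.

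Finally I would plug these into Lemma \ref{thm:compression}. Set $\tau'(\eps) \coloneqq \tau(\eps/6) + t(\eps/6) = 2kd + k\log_2(72k/\eps) = \widetilde O(kd)$ and $m(\eps/6) = \widetilde O(kd/\eps^2)$. The sample complexity bound in Lemma \ref{thm:compression} then gives
\[
\widetilde O\!\left(m(\eps/6) + \frac{\tau'(\eps)}{\eps^2}\right) = \widetilde O\!\left(\frac{kd}{\eps^2} + \frac{kd}{\eps^2}\right) = \widetilde O\!\left(\frac{kd}{\eps^2}\right),
\]
which is the desired bound.

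I do not expect a genuine obstacle here: all three compression lemmas and the compression-to-learning reduction have already been stated in the excerpt, so the proof is essentially bookkeeping to track how $\tau$, $t$, and $m$ transform at each step and to verify that the dominant term after chaining is $\widetilde O(kd/\eps^2)$. The only mildly delicate point is checking that the $\eps/d$ and $\eps/3$ rescalings introduced by the product and mixture lemmas inflate $m$ only by polylogarithmic factors in $k$ and $d$ (since the base $m$ in Lemma \ref{thm:1gaussiancompression} has $1/\eps$ dependence, and the mixture lemma has an explicit $1/\eps$ factor out front, together producing the $1/\eps^2$), but no more.
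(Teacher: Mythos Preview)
Your proposal is correct and follows essentially the same route as the paper's proof: chain Lemma~\ref{thm:1gaussiancompression} (one-dimensional Gaussians), Lemma~\ref{lem:product_compress} (products, using $\acal_{d,1}=\gcal_{1,1}^d$), and Lemma~\ref{lem:compressmixtures} (mixtures) to obtain an $(O(kd),\,O(k\log(k/\eps)),\,\widetilde O(kd/\eps^2))$-compression for $\acal_{d,k}$, then apply Lemma~\ref{thm:compression}. Your parameter bookkeeping matches the paper's (up to constants absorbed into $\widetilde O$), and your observation that the $1/\eps$ in the base scheme together with the explicit $1/\eps$ in the mixture lemma produce the final $1/\eps^2$ is exactly the point.
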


\begin{proof}
	By Lemma~\ref{thm:1gaussiancompression},
	$\gcal_{1,1}$ admits $(2,0,O(1/\eps))$-compression.
	By Lemma~\ref{lem:product_compress}, the class $\acal_{d,1}=\gcal_{1,1}^d$ admits $(O(d),0,O(d \log (3d)/\eps))$-compression.
	Then by Lemma~\ref{lem:compressmixtures},
	the class $k$-mix($\acal_{d,1}$) $=\acal_{d,k}$ admits
	$(O(kd), O(k\log(k/\eps)),O(kd \log(6k) \log(3d) / \eps^2)$-compression.
Applying Lemma~\ref{thm:compression} gives the theorem.
\end{proof}

Using more complicated arguments, one can also
prove that the class of $d$-dimensional Gaussian distributions admits $\big(O(d\log (2d)), \allowbreak O(d^2 \log (2d) \log(d/\eps)), \allowbreak O(d \log (2d)) \big)$ compression.
The high level idea is that by generating $O(d \log (2d)) $ samples from a Gaussian, one gets a rough sketch of the geometry of the Gaussian.
In particular, the convex hull of the points drawn from a Gaussian enclose an ellipsoid 
centred at the mean and whose principal axes are the eigenvectors of the covariance matrix.
Using ideas from 
convex geometry
and random matrix theory, one can in fact encode the centre of the ellipsoid \emph{and} the principal axes using a convex combination of these samples.
Then we discretize the coefficients and obtain an approximate encoding.
See~\cite{ashtiani2017agnostic} for details.

\begin{lemma}[compressing high-dimensional Gaussians]
\label{lem:coreformixtures}
For any positive integer $d$, the class of $d$-dimensional Gaussians admits an $\big(O(d\log (2d)),\allowbreak O(d^2 \log (2d) \log(d/\eps)),\allowbreak O(d \log (2d)) \big)$-compression scheme.
\end{lemma}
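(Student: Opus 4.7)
The plan is to use all $m=O(d\log(2d))$ i.i.d.\ samples in $S$ as the encoding subset $L$, and to let the bit string $B$ specify discretised coefficients of certain linear combinations of $L$. The decoder will reconstruct (i) an approximate mean $\widehat\mu$, and (ii) for each eigenpair $(\lambda_j,v_j)$ of $\Sigma$, an approximate scaled eigenvector $\widehat u_j \approx \sqrt{\lambda_j}\,v_j$, and then output $\mathcal{N}\bigl(\widehat\mu,\sum_{j=1}^d \widehat u_j\widehat u_j^\transpose\bigr)$. There are $d+1$ such vectors, each expressed by $m=O(d\log 2d)$ coefficients rounded to $O(\log(d/\eps))$ bits, giving the bit budget $O(d^2\log(2d)\log(d/\eps))$ and the sample budget $\tau=O(d\log 2d)$.

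\textbf{Geometric inputs.} The scheme rests on two high-probability facts about $S$, each a standard consequence of nonasymptotic random matrix theory for Gaussian samples; it is cleanest to verify them after passing to coordinates in which $\Sigma=I_d$, since the statements we need (existence of certain linear combinations with bounded coefficients) are affine-invariant. First, for $m=\Omega(d\log 2d)$, with probability $\geq 5/6$ the true mean $\mu$ lies in $\mathrm{conv}\{X_1,\dots,X_m\}$ (Wendel's theorem plus concentration gives this regime), so $\mu=\sum_i\alpha_i X_i$ for some $\alpha_i\geq 0$ with $\sum\alpha_i=1$. Second, under the same regime the centred samples $\{X_i-\bar X\}$ span $\mathbb{R}^d$ and the sample covariance is within a constant spectral factor of $\Sigma$; consequently every scaled eigenvector $\sqrt{\lambda_j}\,v_j$ of $\Sigma^{1/2}$ admits a representation $\sum_i\beta_{ij}(X_i-\bar X)$ with $|\beta_{ij}|\leq\mathrm{poly}(d)/\sqrt{m}$, which after scaling stays polynomially bounded in $d$.

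\textbf{Encoding and decoding.} The encoder, which has access to $\mu$ and $\Sigma$, solves the two linear systems above for $(\alpha_i)$ and $(\beta_{ij})$, rounds each coefficient to a grid of mesh size $\eps/\mathrm{poly}(d)$, and stores the rationals in $B$; a direct count gives the stated bit total. The decoder performs the reverse: it reads off the rounded coefficients $\widetilde\alpha_i,\widetilde\beta_{ij}$ and computes $\widehat\mu=\sum_i\widetilde\alpha_i X_i$ and $\widehat u_j=\sum_i\widetilde\beta_{ij}(X_i-\bar X)$ for each $j\in[d]$, then outputs $\mathcal{N}\bigl(\widehat\mu,\sum_j\widehat u_j\widehat u_j^\transpose\bigr)$.

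\textbf{Error propagation and main obstacle.} To conclude we use the standard perturbation bound
\[
\|\mathcal{N}(\mu_1,\Sigma_1)-\mathcal{N}(\mu_2,\Sigma_2)\|_{TV} = O\!\left(\|\Sigma_1^{-1/2}(\mu_2-\mu_1)\|_2+\|\Sigma_1^{-1/2}\Sigma_2\Sigma_1^{-1/2}-I\|_F\right),
\]
and check that the two right-hand norms are each $O(\eps)$: for the mean term, the rounding error contributes at most $m\cdot\max_i\|X_i-\mu\|_{\Sigma^{-1/2}}\cdot(\eps/\mathrm{poly}(d))=O(\eps)$; for the covariance term, a similar calculation plus the bound on $|\beta_{ij}|$ controls $\|\widehat u_j - \sqrt{\lambda_j}\,v_j\|_{\Sigma^{-1/2}}=O(\eps/\sqrt{d})$, whence the Frobenius norm is $O(\eps)$. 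The main technical obstacle is the second geometric claim: showing that $O(d\log 2d)$ samples suffice for every unit vector (in the $\Sigma^{-1/2}$-metric) to be expressed as a linear combination of the centred samples with coefficients of magnitude polynomial in $d$, uniformly in the unknown $\Sigma$. This reduces to an affine-invariant lower bound on the smallest singular value of an $m\times d$ standard Gaussian matrix, and the care required is in making the bound sharp enough that the coefficient budget, the bit budget, and the resulting total-variation error line up with the claimed parameters.
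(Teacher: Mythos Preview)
Your proposal is correct and follows essentially the same approach that the paper sketches: take $O(d\log 2d)$ samples, encode the mean and each scaled principal axis $\sqrt{\lambda_j}\,v_j$ as a bounded-coefficient linear combination of the samples, then discretise the coefficients to $O(\log(d/\eps))$ bits each. The paper itself gives only an outline here and defers the details to~\cite{ashtiani2017agnostic}, so your write-up is in fact more explicit than what appears in the text.

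The one noticeable difference is in how the eigenvector encoding is justified. The paper frames it through convex geometry: with $O(d\log 2d)$ samples the convex hull of $S$ contains (an affine copy of) a constant-radius ellipsoid in the whitened coordinates, so $\mu$ and each point $\mu+c\sqrt{\lambda_j}\,v_j$ lies in $\mathrm{conv}(S)$ and hence is a genuine \emph{convex} combination of the samples, automatically giving coefficients in $[0,1]$. You instead go through the smallest singular value of the centred Gaussian matrix to bound the $\ell_2$ norm of the coefficient vector. Both arguments are affine-invariant and both deliver polynomially bounded coefficients, so either route works; the convex-hull version has the mild advantage that the coefficient bound is immediate (each $\alpha_i\in[0,1]$) and no pseudo-inverse analysis is needed, while your singular-value route is perhaps more familiar from random matrix theory and makes the error-propagation bookkeeping slightly cleaner since you work directly with centred samples.
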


In the next section we will see the following bound is tight up to polylogarithmic factors.

\begin{theorem}[learning mixtures of Gaussians]
The class of $k$-mixtures of $d$-dimensional Gaussians can be learned using $\widetilde{O}(kd^2/\eps^2)$ samples.
\end{theorem}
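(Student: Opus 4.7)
My plan mirrors the proof of Theorem~\ref{maincor} (for axis-aligned mixtures), but starts from the compression scheme for full $d$-dimensional Gaussians rather than from a product construction: first compose \Lemma{coreformixtures} with \Lemma{compressmixtures} to obtain a compression scheme for $k$-mix($\gcal_{d,1}$) $=\gcal_{d,k}$, then feed the result into \Lemma{compression}.

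Concretely, \Lemma{coreformixtures} gives that $\gcal_{d,1}$ admits $(\tau,t,m)$-compression with
\[
\tau(\eps) = O(d\log(2d)), \qquad t(\eps) = O\!\left(d^2 \log(2d) \log(d/\eps)\right), \qquad m(\eps) = O(d \log(2d)).
\]
Applying \Lemma{compressmixtures} with these parameters, $\gcal_{d,k}$ admits $(\tau^\star,t^\star,m^\star)$-compression with
\[
\tau^\star(\eps) = O(kd \log(2d)),
\]
\[
t^\star(\eps) = O\!\left(kd^2 \log(2d) \log(d/\eps) + k\log(k/\eps)\right),
\]
\[
m^\star(\eps) = O\!\left(kd \log(2d) \log(6k) / \eps\right).
\]

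Now I would plug these into \Lemma{compression}, which promises a learning algorithm with sample complexity
\[
\widetilde{O}\!\left( m^\star(\eps/6) + \frac{\tau^\star(\eps/6) + t^\star(\eps/6)}{\eps^2}\right).
\]
The first term is $\widetilde{O}(kd/\eps)$ and the second term is dominated by $t^\star(\eps/6)/\eps^2 = \widetilde{O}(kd^2/\eps^2)$, so after absorbing all logarithmic factors in $d,k,1/\eps$ into the $\widetilde{O}$, the total is $\widetilde{O}(kd^2/\eps^2)$, which is exactly the claimed bound.

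There is essentially no conceptual obstacle: everything has been isolated into the three lemmas \Lemma{coreformixtures}, \Lemma{compressmixtures}, and \Lemma{compression}, and the argument is a mechanical composition. The only task requiring any care is bookkeeping of the polylogarithmic factors to confirm that the $\log(d/\eps)$ factor in $t$, the $\log(2d)$ factor in $\tau$, and the $\log(6k),\log(1/\delta)$ factors introduced by \Lemma{compression} all remain hidden inside the $\widetilde{O}$. (The genuinely hard work -- establishing the compression scheme for a single high-dimensional Gaussian -- is of course what goes into \Lemma{coreformixtures} itself, which relies on convex geometry and random matrix theory and is outsourced to~\cite{ashtiani2017agnostic}.)
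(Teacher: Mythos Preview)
Your proposal is correct and follows exactly the same approach as the paper's own proof: combine Lemma~\ref{lem:coreformixtures} with Lemma~\ref{lem:compressmixtures} to obtain the stated compression scheme for $\gcal_{d,k}$, then apply Lemma~\ref{thm:compression} with $m(\eps)=\widetilde{O}(kd/\eps)$ and $\tau'(\eps)=\widetilde{O}(kd^2)$ to conclude. Your bookkeeping of the polylogarithmic factors matches the paper's as well.
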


\begin{proof} 
Combining Lemma~\ref{lem:coreformixtures} and Lemma~\ref{lem:compressmixtures}
implies that the class of $k$-mixtures of $d$-dimensional Gaussians admits a
$$\big(\, O(kd\log (2d)) ,\,
O(kd^2 \log (2d)\log(d/\eps)
+ k \log (k/\eps)) ,\,
O(dk \log k \log(2d)/\eps) \,\big)
$$ compression scheme.
Applying Lemma~\ref{thm:compression} with
$m(\eps)=\widetilde{O}(d k/\eps)$ and $\tau'(\eps)=\widetilde{O}(kd^2 )$
gives  the sample complexity of learning this class is $\widetilde{O}(kd^2/\eps^2)$, completing the proof of
the theorem.
\end{proof}

\section{Lower bounds via Fano's inequality}
\label{sec:lower}

In the previous sections we gave several techniques for upper bounding the sample complexity of density estimation.
This survey would feel incomplete if we do not discuss at least one technique for proving lower bounds.
Note that each of our upper bounds holds uniformly over a class: the sample complexity does not depend on the specific distribution.
Similarly, the lower bounds we discuss in this section also hold for a class of distribution rather than for a specific distribution.
Such a bound is called a minimax lower bound in the statistics literature, and a worst-case lower bound in the computer science literature.

In this section, which is based on~\cite{spherical,ashtiani2017agnostic}, we give a sample complexity lower bound of $\widetilde\Omega(kd/\eps^2)$ for $\acal_{d,k}$,
and a lower bound of $\widetilde\Omega(kd^2/\eps^2)$ for $\gcal_{d,k}$.
That is, we show that any density estimation method that learns the class $\gcal_{d,k}$ 
uniformly, in the sense of Definition~\ref{def:density_estimation},
must have a sample complexity of $\widetilde\Omega(kd^2/\eps^2)$.
This shows the density estimation method described in the previous section has optimal sample complexity, up to polylogarithmic factors.

We will need the definition of the Kullback-Leibler divergence (KL-divergence, also called the relative entropy) between two distributions.

\begin{definition}[Kullback-Leibler divergence]
Let $f$ and $g$ be densities over domain $Z$.
Their \emph{KL-divergence} is defined as
\[\kl(f \parallel g)=
\int_Z f (x) \log \frac{f(x)}{g(x)} \textnormal{ d}x.
\]
\end{definition}

The KL-divergence is a measure of closeness between distributions.
It is always non-negative, and is zero if and only if the two distributions are equal almost everywhere.
However, it is not a metric, since it is not symmetric, and it can be $+\infty$.

The proof of the following lemma, which is called the `generalized Fano's inequality,' uses Fano's inequality from information theory.
It was first proved in~\cite[page 77]{devroye_density_estimation_first}.
We write here a slightly stronger version, which appears in~\cite[Lemma 3]{bin_yu}.

\begin{lemma}[generalized Fano's inequality]
	Suppose we have $M>1$ distributions $f_1,\dots,f_M$ with
	\[
	\kl(f_i \parallel f_j) \leq \beta \textnormal{ and }
	\|f_i - f_j\|_1 \geq \alpha
	\qquad
	\forall i\neq j\in[M].\]
	Consider any density estimation method that gets $n$ i.i.d.\ samples from some $f_i$, and outputs an estimate $\widehat{f}$ (the method does not know $i$). 
	For each $i$, define $e_i$ as follows:
	assume the method receives samples from $f_i$, and outputs $\widehat{f}$. Then $e_i\coloneqq \mathbf{E}\|f_i-\widehat{f}\|_1$.
	Then, we have
	\[ \max_i e_i \geq \alpha (\log M - n\beta + \log 2) / (2 \log M) \:.\]
\end{lemma}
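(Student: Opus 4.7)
The plan is to reduce density estimation to an $M$-ary hypothesis testing problem and then invoke the classical Fano inequality from information theory. First, turn the estimate $\widehat{f}$ into a test by defining
\[
\psi(\widehat{f}) \coloneqq \argmin_{j\in [M]} \|\widehat{f} - f_j\|_1.
\]
Because the $f_j$ are pairwise $\alpha$-separated in $L_1$, the triangle inequality implies $\|\widehat{f} - f_i\|_1 \geq \alpha/2$ on the event $\{\psi(\widehat{f})\neq i\}$: otherwise $\widehat{f}$ would lie strictly closer to some $f_j$ with $j\neq i$ than to $f_i$. Taking expectations over $S\sim f_i^n$ immediately yields
\[
e_i \;\geq\; \frac{\alpha}{2}\,\Pr_{S\sim f_i^n}\!\bigl[\psi(\widehat{f}(S))\neq i\bigr].
\]

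Next, introduce a uniform prior on the index: let $I\sim \textnormal{Uniform}([M])$, draw $S \mid I \sim f_I^n$, and set $\widehat{I}\coloneqq \psi(\widehat{f}(S))$. Since the maximum over $i$ of the conditional error probabilities is at least the average, $\max_i \Pr[\widehat{I}\neq I \mid I=i]\geq \Pr[\widehat{I}\neq I]$. Now apply the standard Fano inequality $H(I\mid \widehat{I})\leq \log 2 + \Pr[\widehat{I}\neq I]\cdot \log(M-1)$. Combining $H(I\mid \widehat{I}) = \log M - I(I;\widehat{I})$ with the data-processing inequality $I(I;\widehat{I})\leq I(I;S)$ rearranges to
\[
\Pr[\widehat{I} \neq I] \;\geq\; \frac{\log M - I(I;S) - \log 2}{\log M}.
\]

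It remains to show $I(I;S)\leq n\beta$. The $S$-marginal equals $\bar{f}^n$ with $\bar{f} \coloneqq \frac{1}{M}\sum_j f_j$, so $I(I;S) = \frac{1}{M}\sum_i \kl(f_i^n \parallel \bar{f}^n)$. Convexity of $\kl$ in its second argument bounds each summand by $\frac{1}{M}\sum_j \kl(f_i^n \parallel f_j^n)$, and tensorization of KL for product distributions gives $\kl(f_i^n\parallel f_j^n) = n\,\kl(f_i\parallel f_j)\leq n\beta$. Chaining this mutual-information bound with the Fano estimate and the reduction from estimation to testing yields the desired inequality. The main obstacle is not mathematical depth---every step is a textbook fact---but bookkeeping: tracking the directions of inequalities, remembering that the $\log 2$ overhead in Fano arises from the binary entropy of the error indicator, and being careful that convexity of $\kl$ is applied in its second argument (where it holds) rather than its first.
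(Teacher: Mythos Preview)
The paper does not actually prove this lemma; it merely cites \cite{devroye_density_estimation_first} and \cite{bin_yu} and remarks that the argument ``uses Fano's inequality from information theory.'' Your proposal fills in exactly that standard argument --- reduce estimation to testing via a minimum-distance rule, apply Fano's inequality under a uniform prior, and bound the mutual information by the pairwise KL divergences --- so the approach is the intended one.

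Two small points deserve correction. First, your description of the $S$-marginal is wrong: when $I$ is uniform and $S\mid I\sim f_I^n$, the marginal of $S$ is $\frac{1}{M}\sum_j f_j^{\,n}$, a \emph{mixture of products}, not $\bar f^{\,n}=(\frac{1}{M}\sum_j f_j)^n$, a \emph{product of mixtures}. Fortunately this slip is harmless for the bound you need: convexity of $\kl(\cdot\parallel\cdot)$ in its second argument applied to the correct marginal $\frac{1}{M}\sum_j f_j^{\,n}$ gives precisely $\kl(f_i^{\,n}\parallel \frac{1}{M}\sum_j f_j^{\,n})\leq \frac{1}{M}\sum_j \kl(f_i^{\,n}\parallel f_j^{\,n})\leq n\beta$, which is what you use downstream. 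Second, your argument (like the source \cite[Lemma~3]{bin_yu}) yields the bound $\alpha(\log M - n\beta - \log 2)/(2\log M)$; the $+\log 2$ in the paper's displayed statement appears to be a sign typo.
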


This immediately leads to the following sample complexity lower bound for learning a class $\fcal$.

\begin{corollary}\label{cor:fano}
	Suppose for all small enough $\eps>0$ there exist $M$ densities $f_1,\dots,f_M \in \fcal$ with
	\[
	\kl(f_i \parallel f_j) =O(\kappa(\eps)) \textnormal{ and }
	\|f_i - f_j\|_1 =\Omega(\eps)
	\qquad
	\forall i\neq j\in[M].\]	
	Then the sample complexity of learning $\fcal$  is $\Omega((\log M) / \kappa(\eps) \log(1/\eps) )$.
\end{corollary}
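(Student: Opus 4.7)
The strategy is to reduce the density estimation problem to a setting where the generalized Fano's lemma applies. Given a density estimation method for $\fcal$ with sample complexity $m \coloneqq m_\fcal(\eps,1/3)$, the plan is to construct from it an estimator whose \emph{expected} $L_1$-error is at most $\alpha/C$ for a suitably large constant $C$, where $\alpha = c_0 \eps$ is the $L_1$-separation guaranteed by the hypothesis. Plugging this into the generalized Fano's lemma will then yield a lower bound on the number of samples consumed by the constructed estimator, which translates into a lower bound on $m$.

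The first step is to amplify the constant success probability, since Fano's lemma is sensitive to expected $L_1$-error, whereas the raw guarantee allows $\Theta(1)$ expected error (because on the $1/3$-probability failure event the distance can be as large as $2$). To reduce the failure probability to $\delta$, I would run $k = \Theta(\log(1/\delta))$ independent copies of the learner on disjoint batches and apply \Theorem{candidates} with $O(\log(k/\delta)/\eps^2)$ additional samples to select a candidate whose $L_1$-error is within a constant factor of the best. With probability at least $1-\delta$ one of the candidates is $\eps$-close, so the selected output is $O(\eps)$-close, using a total of $O(m\log(1/\delta))$ samples.

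Next, invoke the boosted estimator with precision $\eps'' = c_0\eps/(4C)$ and failure probability $\delta = c_0\eps/(4C)$, consuming $n = O(m\log(1/\eps))$ samples. Combined with the crude bound $\|f_i-\widehat{f}\|_1 \leq 2$, the expected error satisfies $e_i \leq \eps'' + 2\delta < \alpha/C$ for every $i$. The generalized Fano's lemma (with $\alpha = c_0\eps$ and $\beta = O(\kappa(\eps))$) then gives
\[
\frac{\alpha}{C} \;\geq\; \max_i e_i \;\geq\; \frac{\alpha(\log M - n\beta + \log 2)}{2\log M},
\]
which, once $M$ exceeds a universal constant, rearranges to $n = \Omega(\log M / \kappa(\eps))$. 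Combining with $n = O(m \log(1/\eps))$ yields $m = \Omega((\log M)/(\kappa(\eps) \log(1/\eps)))$, as required.

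The main subtlety is the amplification step: without it, the constant failure probability contributes $\Theta(1)$ to the expected error, far too large for Fano's lemma to give anything useful. Driving the failure probability down to $\Theta(\eps)$ is what makes the expected $L_1$-error comparable to $\eps$, and this boosting costs precisely the $\log(1/\eps)$ factor that appears in the denominator of the final bound.
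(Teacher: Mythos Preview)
Your approach is correct and is essentially what the paper has in mind; the paper itself gives no proof beyond the phrase ``This immediately leads to the following sample complexity lower bound,'' so you have supplied the missing details. In particular, you have correctly identified the key point: Fano's inequality controls \emph{expected} $L_1$ error, while Definition~\ref{def:density_estimation} only guarantees constant success probability, so one must boost the confidence to $1-\Theta(\eps)$ before the expected error becomes comparable to the separation $\alpha$. That boosting step is precisely where the $\log(1/\eps)$ in the denominator comes from, and you explain this clearly.

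One small point of sloppiness: after boosting, the output is only guaranteed to be $O(\eps)$-close (with the constant coming from Theorem~\ref{thm:candidates}, roughly $7\eps$), not $\eps''$-close for an $\eps''$ of your choosing. So the inequality $e_i \leq \eps'' + 2\delta$ is not literally what you get; you get $e_i \leq C'\eps + 2\delta$ for some absolute $C'$. To make $e_i < \alpha/4 = c_0\eps/4$ you therefore need $C' < c_0/4$, which you cannot force directly. The clean fix is to invoke the hypothesis of the corollary at a rescaled parameter $\eps_0 = C'''\eps$ for a large constant $C'''$ (this is allowed since the hypothesis holds ``for all small enough $\eps$''), so that the separation $\alpha = \Omega(\eps_0)$ dominates the $O(\eps)$ error of the boosted learner. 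This only shifts $M$ and $\kappa$ by a constant factor in their argument, which is harmless in the asymptotic statement. With that adjustment your argument is complete.
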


We start with describing the lower bound construction
which gives a sample complexity lower bound of $\widetilde\Omega(kd/\eps^2)$ for $\acal_{d,k}$.
This lower bound was proved in~\cite{spherical}.

We claim it suffices to give a lower bound of $\Omega(d/\eps^2)$ for $\acal_{d,1}$.
For, consider a mixture of axis-aligned Gaussians whose components are extremely far away, such that the total variation distance between any two components is very close to 1.
To learn the mixture distribution, one needs to learn each component.
But each data point can help in learning \emph{one} of the $k$ components.
Since for learning any of the components one needs $\Omega(d/\eps^2)$ samples,
one will need 
$\Omega(kd/\eps^2)$ samples to learn the mixture.
Some nontrivial work has to be done to make this intuitive argument rigorous, but we omit that, and focus on proving the lower bound of $\Omega(d/\eps^2)$ for $\acal_{d,1}$.


Let $M \coloneqq 2^{d/5}$.
To prove a lower bound of $\Omega(d/\eps^2)$ for $\acal_{d,1}$, we will build $M$ densities $f_1,\dots,f_{M} \in \acal_{d,1}$ satisfying the conditions of the corollary.

By the Gilbert-Varshamov bound in coding theory, there exists $2^{d/5}$ elements in $\{0,1\}^d$ such that any two of them differ in at least $d/5$ components. 
(To see this, note that the size of a Hamming ball of radius $d/5$ is 
$$\sum_{j=0}^{d/5} \binom{d}{j} \leq \left(\frac{ed}{d/5}\right)^{d/5}
=(5e)^{d/5} < 2^{4d/5};$$
hence one can start from an empty set $S$,
then add elements from $\{0,1\}^d$ one by one to $S$, and delete the Hamming ball of each added element.
So long as $S$ has less than $2^{d/5}$ elements,
the number of deleted elements is less than $2^{d/5}\times 2^{4d/5} = 2^d$, so there are still undeleted elements in $\{0,1\}^d$, and one still can add more elements to $S$.)
Call them $\mu_1,\dots,\mu_M \in \mathbb{R}^d$,
and let $f_i \coloneqq \ncal(\mu_i \eps / \sqrt d, I_d)$.
The densities $f_1,\dots,f_M$ are Gaussians with identity covariance matrix,
with their means are chosen carefully at vertices of $d$-dimensional hypercube with side length $\eps/\sqrt d$.

The KL-divergence between two general Gaussians
$\ncal(\mu,\Sigma)$ and $\ncal(\mu',\Sigma')$ is given by (see, e.g., \cite[Section~9]{duchi})
\[\kl(\ncal(\mu,\Sigma)\parallel\ncal(\mu',\Sigma')) 
	={
		\frac{1}{2} \left( \tr(\Sigma^{-1}\Sigma' - I)
		+ (\mu-\mu')^T \Sigma^{-1} (\mu-\mu')^T
		- \ln \left( \frac {\det (\Sigma')}{\det (\Sigma)}\right)
		\right)},\]
thus, for $i,j\in [M]$ we get
\begin{align*}
\kl(f_i \parallel f_j)  &= \kl(\ncal(\mu_i \eps / \sqrt d,I_d),\ncal(\mu_j \eps / \sqrt d,I_d))  \\
	&={
		\frac{1}{2} \left( \|\mu_i-\mu_j\|_2^2 \eps^2 / d
		\right)}
		\leq 
		\frac{1}{2} \left( d \eps^2 / d	\right) = \eps^2/2,
\end{align*}
		
as required.

Fix distinct $i$ and $j$.
We now lower bound the $L_1$ distance 
between $f_i$ and $f_j$.
Let $X_i \sim f_i$ and $X_j \sim f_j$.
Any two $\mu_i$ and $\mu_j$ differ in at least $d/5$ coordinates.
Fix such $d/5$ coordinates,
and, without loss of generality, assume that in $\mu_i$ these coordinates are 0, and they are 1 in $\mu_j$.
If we project $X_i$ onto one such coordinate, we get an $N(0,1)$ random variable,
so the sum over these coordinates of $X_i$ has distribution $\ncal(0, d/5)$.
Similarly, if we project $X_j$ onto one such coordinate, we get an $N(\eps / \sqrt d,1)$ random variable,
so the sum over these coordinates of $X_j$ has distribution $\ncal(\eps  \sqrt d, d/8)$.
The total variation distance between
$\ncal(0, d/8)$ and $\ncal(\eps  \sqrt d, d/8)$ equals
the total variation distance between
$\ncal(0, 1)$ and $\ncal(\sqrt 8 \eps, 1)$,
which is $\Omega(\eps)$ (see Lemma~\ref{lem:tvnormals} below).
Hence, the total variation distance between $f_i$ and $f_j$ is also $\Omega(\eps)$, as required. 


\begin{lemma}\label{lem:tvnormals}
Let $\eps \in [0,1]$. Then,
\[ \| \ncal(0,1) - \ncal(\eps,1) \|_1 \geq \eps / 5 .\]
\end{lemma}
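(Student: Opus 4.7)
The plan is to reduce the inequality to an explicit computation. Write $\phi$ for the standard normal density and $\Phi$ for its CDF. Since both $\phi(x)$ and $\phi(x-\varepsilon)$ are unimodal with the same shape and $\phi(x) \geq \phi(x-\varepsilon)$ precisely when $x \leq \varepsilon/2$ (by comparing $x^2$ with $(x-\varepsilon)^2$), the $L_1$ distance splits cleanly into two symmetric tails:
\[
\|\mathcal{N}(0,1)-\mathcal{N}(\varepsilon,1)\|_1 = 2\int_{-\infty}^{\varepsilon/2}\bigl(\phi(x)-\phi(x-\varepsilon)\bigr)\,\mathrm{d}x = 2\bigl(\Phi(\varepsilon/2)-\Phi(-\varepsilon/2)\bigr) = 4\bigl(\Phi(\varepsilon/2)-\tfrac{1}{2}\bigr).
\]
So the task reduces to a one-variable estimate: show $\Phi(\varepsilon/2) - \tfrac{1}{2} \geq \varepsilon/20$ for all $\varepsilon \in [0,1]$.

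For this, I would simply use monotonicity of $\phi$ on $[0,\infty)$. Since $\varepsilon/2 \leq 1/2$, we have
\[
\Phi(\varepsilon/2) - \tfrac{1}{2} \;=\; \int_0^{\varepsilon/2}\phi(t)\,\mathrm{d}t \;\geq\; \frac{\varepsilon}{2}\,\phi(1/2) \;=\; \frac{\varepsilon}{2}\cdot \frac{e^{-1/8}}{\sqrt{2\pi}}.
\]
Multiplying by $4$ gives $\|\mathcal{N}(0,1)-\mathcal{N}(\varepsilon,1)\|_1 \geq \frac{2e^{-1/8}}{\sqrt{2\pi}}\varepsilon$, and a crude numerical check of $2e^{-1/8}/\sqrt{2\pi} \approx 0.704 > 1/5$ finishes the argument.

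There is no real obstacle here; the only subtlety is remembering that the statement is about the $L_1$ norm (not TV), which is why the computation yields a constant close to $2\phi(0) \approx 0.798$ rather than half that. I would include one line observing the sign pattern of $\phi(x)-\phi(x-\varepsilon)$ so the reader sees where the factor of $2$ comes from, and then the two inequalities above complete the proof.
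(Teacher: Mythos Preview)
Your proof is correct. Both you and the paper start from the same observation that $\phi(x)\ge\phi(x-\eps)$ exactly on $x\le\eps/2$, so the $L_1$ distance equals twice a one-sided integral. After that the two arguments diverge. The paper factors the integrand as $e^{-x^2/2}(e^{x\eps-\eps^2/2}-1)$, applies $e^y-1\ge y$, and integrates the resulting linear expression to obtain
\[
\|\ncal(0,1)-\ncal(\eps,1)\|_1 \;\ge\; \eps\!\left(\frac{2e^{-\eps^2/8}}{\sqrt{2\pi}}-\eps\,\mathbf{Pr}[N(0,1)\ge\eps/2]\right),
\]
which is then bounded below by $\eps/5$. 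You instead evaluate the one-sided integral exactly as a CDF difference, obtaining the closed form $4(\Phi(\eps/2)-\tfrac12)$, and then lower-bound this by the trivial Riemann-sum estimate $\Phi(\eps/2)-\tfrac12\ge(\eps/2)\phi(1/2)$. Your route is a bit cleaner: it yields the well-known exact formula for the $L_1$ distance between shifted unit Gaussians and avoids the exponential linearization, at the cost of nothing. The paper's approach, on the other hand, generalizes more readily to situations where no closed form is available.
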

\begin{proof}
\begin{align*}
\| \ncal(0,1) - \ncal(\eps,1) \|_1 &
= \frac{2}{\sqrt{2\pi}} \int_{\eps/2}^{\infty} e^{-(x-\eps)^2/2} - e^{-x^2/2} dx\\
&= \frac{2}{\sqrt{2\pi}} \int_{\eps/2}^{\infty}
e^{-x^2/2} \left( e^{-\eps^2/2+x\eps}-1 \right) dx\\
&\geq
\frac{2}{\sqrt{2\pi}} \int_{\eps/2}^{\infty}
e^{-x^2/2} \left(-\eps^2/2+x\eps \right) dx\\
&=
\frac{-\eps^2}{\sqrt{2\pi}} \int_{\eps/2}^{\infty}
e^{-x^2/2}  dx
+
\frac{2\eps}{\sqrt{2\pi}} \int_{\eps/2}^{\infty}
e^{-x^2/2} x  dx\\
& =
-\eps^2 \mathbf{Pr}[N(0,1)\geq\eps/2]
+
\frac{2\eps}{\sqrt{2\pi}} e^{-(\eps/2)^2/2} \\
& = \eps \left( 
\frac{2 e^{-\eps^2/8} }{\sqrt{2\pi}} -  \eps \mathbf{Pr}[N(0,1)\geq\eps/2]
\right) \geq \eps/5,
\end{align*}
since $\mathbf{Pr}[N(0,1)\geq\eps/2] \leq 1/2$ and $\eps\in[0,1]$, completing the proof.
\end{proof}

We now describe the lower bound construction
which gives a sample complexity lower bound of $\widetilde\Omega(kd^2/\eps^2)$ for $\gcal_{d,k}$. This lower bound was proved in~\cite{ashtiani2017agnostic}.
Again it suffices to give a lower bound of $\Omega(d^2/\eps^2)$ for $\gcal_{d,1}$.
Let $r = 9$ and $\lambda =  \eps  \log(1/\eps)/\sqrt{d}$.
Guided by Corollary~\ref{cor:fano}, we will build $2^{\Omega(d^2)}$ Gaussian distributions of the form $f_a\coloneqq \ncal(0, \Sigma_a)$ where $\Sigma_a = I_d + \lambda U_a U_a\transpose$, where each $U_a$ is a $d\times d/r$ matrix with orthonormal columns.
To apply Corollary~\ref{cor:fano}, we need to give an upper bound on the KL-divergence between any two $f_a$ and $f_b$, and a lower bound on their total variation distance.
Upper bounding the KL divergence is  easy:
by the KL-divergence formula and since $\|U_a\transpose U_b\|_F^2\geq0$,
\begin{align*}
2\DKL{f_a}{f_b}
&=
 \tr(\Sigma_a^{-1}\Sigma_b - I)
=\tr( (I - \frac{\lambda}{1+\lambda} U_a U_a\transpose) (I + \lambda U_b U_b\transpose) - I)\\
& = 
\tr( \lambda U_b U_b\transpose - \frac{\lambda}{1+\lambda} U_a U_a\transpose  - \frac{\lambda^2}{1+\lambda}U_a U_a\transpose U_b U_b\transpose  ) \\
& =  \lambda (d/r) - \frac{\lambda}{1+\lambda} (d/r)  - 
\frac{\lambda^2}{1+\lambda} \|U_a\transpose U_b\|_F^2 \\
& \leq \frac{\lambda^2 d}{r+r\lambda} \leq \lambda^2 d / (2r)=O(\eps^2 \log^2(1/\eps)),
\end{align*}
as required.

Our next goal is to give a lower bound on the total variation distance between $f_a$ and $f_b$.
For this, we would like the matrices $\{U_a\}$ to be ``spread out,'' in the sense that their columns should be nearly orthogonal.
It is possible to show that if we choose the $U_a$ randomly, we can achieve $\|U_a\transpose U_b \|_F^2 \leq \frac{d}{2r}$ for any $a\neq b$.
Then, if $S_a$ is the subspace spanned by the columns of $U_a$, then we expect that a Gaussian drawn from $\ncal(0, \Sigma_a)$ should have a slightly larger projection onto $S_a$ then a Gaussian drawn from $\ncal(0, \Sigma_b)$.
This will then allow us to give a lower bound on the total variation distance between $\ncal(0, \Sigma_a)$ and $\ncal(0, \Sigma_b)$.
More precisely, one can show that $\|U_a\transpose U_b \|_F^2 \leq \frac{d}{2r}$ implies
 $\DTV{f_a}{ f_b} = \Omega\left( \frac{\lambda \sqrt{d}}{\log(1 / \lambda\sqrt{d})} \right) = \Omega(\eps)$, completing the proof.

We mentioned here just one lower bound technique based on Fano's inequality. 
There are at least two other methods for proving lower bounds, see~\cite{bin_yu}.

\section{Concluding remarks}
\label{sec:remarks}

\textbf{Characterizing the sample complexity of a class.}
An insight from supervised learning theory is that the sample complexity of learning a class (of concepts, functions, or distributions) must be proportional to 
the intrinsic dimension of the class divided by $\varepsilon^2$, where $\varepsilon$ is the error tolerance. (We would expect this dimension to be  equal the number of parameters needed to describe an object in that class using its `natural' parametrization, or the `degrees of freedom' of an object in the class.)
One challenge in learning theory is to formally define this dimension.
For the case of binary classification, the intrinsic dimension is captured by the VC-dimension of the concept class (see~\cite{vc,Blumer:1989}).

In the case of distribution learning/density estimation, a candidate for quantifying the dimension of a class, is the VC-dimension of its Yatrocas class.
However, it is not hard to come up with examples where this VC-dimension is infinite while the class can be learned with finite samples.
A second candidate is using covering numbers. This method also does not work;
for instance, the class of Gaussians do not have a finite covering number, yet its sample complexity is finite.

We showed in Section~\ref{sec:compression} that if a class of distribution has a compression scheme of size $D$, then its sample complexity is $D/\varepsilon^2$, up to logarithmic factors.
Hence, the size of the smallest compression is one candidate for capturing the intrinsic dimension of a distribution class.
As we have shown here, for high-dimensional Gaussians and their mixtures, the smallest compression size captures the sample complexity up to logarithmic factors
(and it is also equal to the number of parameters needed to describe a distribution in the class using the mean and the covariance matrix).
It is an intriguing open question whether distribution learning and  compressibility are equivalent (such a statement has recently been shown to be true in the setting of binary classification~\cite{Moran:2016}).

\paragraph{\textbf{Robust density estimation, or agnostic learning}.}
We have assumed that the target belongs to the prescribed class of distributions.
In practice, this is rarely the case.
Fortunately, it turns out that all the methods we discussed here can be extended to the case where the target is not necessarily in the class, but is \emph{close} (in total variation distance) to some member of this class, albeit with some loss in the approximation error.
An example of a guarantee that can be given in this setting is Equation~(\ref{agnostic}).
We refer the reader to the relevant papers for details.

\paragraph{\textbf{Discrete distributions and covering numbers.}} We have focused on continuous distributions in this survey.
Learning discrete distributions is a whole other world and many exciting techniques have been developed for those classes. One important technique is finding an $\eps$-net for the target class and then applying Theorem~\ref{thm:candidates}.
See~\cite{Diakonikolas2016} and the references therein.

\paragraph{\textbf{Kernel density estimation.}}
A  popular method for density estimation in practice is kernel density estimation (see, e.g., \cite[Chapter~9]{devroye_book}).
The few proven convergence rate results for this method, require certain smoothness assumptions on the class of densities (e.g., \cite[Theorem~9.5]{devroye_book}). The class of  Gaussians is not universally Lipschitz and do not satisfy these assumptions, so these results do not apply.
Indeed we believe that any kernel-based method for learning a high dimensional Gaussian must have sample complexity exponential in the dimension, but this is yet to be proved.

\paragraph{\textbf{Variants of the problem.}}
There are many natural and important variants of the problems presented above. 
What if instead of the $L_1$ distance, we consider the KL-divergence, or the $L_2$ distance, as the measure of closeness? The $L_1$ results do not carry over immediately, and several new ideas are needed.
Also, in practice the i.i.d.\ assumption is not realistic, and an interesting direction is to extend the above results to settings where some correlation among the input data points is possible.

\paragraph{\textbf{Computational complexity.}}
Designing efficient algorithms for distribution learning is crucial for practical applications; the sample complexity does not always capture the computational difficulty.
While we have totally ignored computational issues in this survey for brevity, a whole complexity theory can be developed around the task of distribution learning: which classes are `easy' to learn, and which ones are hard?
We refer the reader to~\cite{gaussian_mixture} and the references therein.

\paragraph{\textbf{Parameter estimation.}}
In \emph{parameter estimation},
which has been greatly popular in the theoretical computer science community recently (see, e.g.,~\cite{dasgupta1999learning,Belkin,moitravaliant}), the goal is to identify the `parameters' of the target distribution, for example, the mixing weights and the parameters of the Gaussians, up to a desired accuracy.
Parameter estimation is a more difficult problem than density estimation, and any algorithm for parameter estimation requires some separability assumptions for the target Gaussians, whereas for density estimation no such assumption is needed. E.g., consider the case that $k = 2$ and the two
components are identical; then there is no way to learn their mixing weights.

\end{document}